\DeclareRobustCommand{\SkipTocEntry}[4]{}
\def\id{{\operatorname{id}}}
\def\Z{{\mathbb Z}}
\def\Q{{\mathbb Q}}
\def\F{{\mathbb F}}
\def\Com{{\mathscr Com}}
\newcommand{\e}{\mathcal{E}}
\newcommand{\Ch}{\operatorname{Ch}}
\newcommand{\Nat}{\operatorname{Nat}}
\newcommand{\oc}[2]{[\begin{subarray}{c} #1 \\ #2 \end{subarray}]}
\newcommand{\bC}{\overline{C}}
\newcommand{\bD}{\overline{D}}
\newcommand{\diag}{\operatorname{diag}}
\newcommand{\tiplD}{\widetilde{ipl\mathcal{D}}_\Com}
\newcommand{\iplD}{ipl\mathcal{D}_\Com}
\numberwithin{equation}{section}
\newtheorem{satz}{Satz}[section]
\newtheorem{Theorem}[satz]{Theorem}
\newtheorem{Lemma}[satz]{Lemma}
\newtheorem{Prop}[satz]{Proposition}
\theoremstyle{definition}
\newtheorem{Remark}[satz]{Remark}
\newtheorem{Example}[satz]{Example}
\newtheorem{Def}[satz]{Definition}
\newtheorem{Th}{Theorem}\renewcommand{\theTh}{\Alph{Th}}
\title[Operations on the Hochschild complex of commutative algebras]{The complex of formal operations on the Hochschild chains of commutative algebras}
\date{\today}
\address{Department of Mathematical Sciences, 
         University of Copenhagen, 
         Universitetsparken 5,
       2100 Copenhagen, 
         Denmark}
\email{angela.klamt@gmail.com}
\author{Angela Klamt}
\begin{document}
\begin{abstract}
We compute the homology of the complex of formal operations on the Hochschild complex of differential graded commutative algebras as defined by Wahl and prove that these can be built as infinite sums of operations obtained from Loday's shuffle operations, Connes' boundary operator and the shuffle product.
\end{abstract}

\maketitle
\section*{Introduction}
Natural operations on the Hochschild homology of commutative algebras have been studied by several authors, see for example \cite{barr68}, \cite{gers87}, \cite{loda89} and \cite{mcca93}.  Recently, in \cite{wahl12}, Wahl defined a complex of so called formal operations for a given class of algebras which  comes along with a dg-map to the complex of natural transformations. In the case of commutative algebras this map is an injection.  In this paper we prove that the homology of the complex of formal operations in the commutative case can be built out of Loday's shuffle operations, Connes' boundary operator and shuffle products.

\medskip 

Let $\F$ be a field. The commutative PROP $\e$ over $\F$ is defined to be the symmetric monoidal category with objects the natural numbers (including zero) and morphism spaces $\F[FinSet(-,-)]$. A (unital) commutative differential graded algebra is a strong symmetric monoidal functor $\Com \to \Ch$. In \cite{wahl11}, the Hochschild complex $C(\Phi)$ for general functors $\Phi: \Com \to \Ch$  is defined  as $C(\Phi)= \bigoplus_k \Phi(k)[k-1]$, with differentials coming from the simplicial structure on $\Phi(k)$. For a strong symmetric monoidal functor $\Phi$ (i.e. $\Phi(1)$ is a commutative algebra) this definition agrees with the classical definition of the Hochschild complex $C_*(\Phi(1),\Phi(1))$. Iterating the construction, one defines the iterated Hochschild complex $C^{(n,m)}(\Phi)$ (see Section \ref{sec:hoch} for a precise definition). 
In this paper we compute the homology of the formal transformations of the (iterated) Hochschild homology of commutative algebras $\Nat_\e(\oc{n_1}{m_1},\oc{n_2}{m_2})$ which  in \cite{wahl12} where defined as the complex of maps $C^{(n_1,m_1)}(\Phi) \to C^{(n_2,m_2)}(\Phi)$ natural in all functors $\Phi: \Com \to \Ch$.

In \cite{loda89}, Loday defined the so called shuffle operations constructed from permutations $\{1, \cdots, n+1\} \to \{1, \cdots, n+1\}$ which keep the first entry fixed. These act on the $n$--th degree of the Hochschild complex of an algebra $A$ by permuting the $(n+1)$ factors of $A$ accordingly. Loday's lambda operations can be obtained by similar constructions. These correspond to the power operations on the homology of the free loop space of a manifold (as it is explained in \cite{mcca93}). Moreover, they have been used to give a Hodge decomposition of cyclic and Hochschild homology. Both, the lambda and the shuffle operations commute with the boundary maps and one can obtain the lambda operations as linear combination of the shuffle operations and vice versa. However, the shuffle operations fulfill one extra property which makes them suitable for our context: The $k$--th shuffle operation $sh^k$ acts trivially on all Hochschild degrees smaller than $k$, i.e. $(sh^k)_l=0$ if $l <k$. Hence the infinite sum of shuffle operations is still a well-defined operation on the Hochschild complex.
Denoting Cones boundary operator by $B$ and defining operations $B^k= B \circ sh^k$, we can compute the homology of $\Nat_\e(\oc{1}{0},\oc{1}{0})$, i.e. the homology of the complex of operations $C(\Phi) \to C(\Phi)$ natural in all functors $\Phi: \Com \to \Ch$:

\begin{Th}[{see Theorem \ref{Th:akbk}}]\label{Th:A}
The homology $H_*(\Nat(\oc{1}{0}, \oc{1}{0}))$ is concentrated in degrees $0$ and $1$.
In these degrees an explicit description of the elements is given by the following:
\begin{enumerate}
\item
Every element in $H_0(\Nat(\oc{1}{0}, \oc{1}{0}))$ can be uniquely written as
$\sum_{k=0}^\infty c_k [sh^k]$ with $c_k \in \F$ and $[sh^k]$ the classes of the cycles $sh^k$ in homology. In the $i$-th degree of the product this is given by
$(\sum_{k=0}^\infty c_k [sh^k])_i=\sum_{k=0}^i c_k [(sh^k)_i]$, i.e. it is a finite sum in each component.
\item
Every element in $H_1(\Nat(\oc{1}{0}, \oc{1}{0}))$ can be uniquely written as
$\sum_{k=0}^\infty c_k [B^k]$ with $c_k \in \F$ and $[B^k]$ the classes of the cycles $B^k$ in homology. In the $i$-th degree of the product this is given by
$(\sum_{k=0}^\infty c_k [B^k])_i=\sum_{k=0}^i c_k [(B^k)_i]$.
\end{enumerate}
\end{Th}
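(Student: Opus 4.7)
My plan is to compute $H_*(\Nat(\oc{1}{0},\oc{1}{0}))$ by first unpacking the complex concretely, then reducing to a fixed-source subproblem via a spectral sequence, and finally identifying the explicit cycles with Loday's shuffle operations and their compositions with Connes' operator $B$.

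\emph{Unpacking.} Using Wahl's framework and a Yoneda-type argument over the free commutative algebras (where the universal $\Phi$ representing the functor ``$n$-th Hochschild level'' sits), a natural operation $C(\Phi) \to C(\Phi)$ is given, on each component $\Phi(n) \to \Phi(m)$, by an element of $\mathcal{E}(n,m) = \F[FinSet(n,m)]$. Hence the underlying graded vector space of $\Nat(\oc{1}{0},\oc{1}{0})$ is $\prod_n \bigoplus_m \F[FinSet(n,m)][m-n]$: product over the source index $n$ because an operation must be defined in every Hochschild degree, and sum over the target $m$ because the image of a fixed element sits in finitely many degrees. The differential is the graded commutator $[b,-]$ of the Hochschild boundary $b$ with the operation and decomposes into a source-side part (pre-composition with simplicial faces in $\Com$) and a target-side part (post-composition).

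\emph{Reduction and identification.} I would filter by source index $n$, so that on the $E_0$-page only the target-side differential survives. For each fixed $n$, the corresponding complex $C_n = \bigoplus_m \F[FinSet(n,m)][m-n]$ is essentially a multilinear piece of the Hochschild complex of the free commutative $\F$-algebra on $n$ generators. By an HKR-type argument, or by an explicit contracting homotopy written directly on $\F[FinSet(n,-)]$ using the basepoint at $1\in FinSet(n,1)$, its homology is concentrated in degrees $0$ and $1$, with explicit representatives given by the shuffle operation $sh^n$ and by $B^n = B\circ sh^n$, respectively. The source-side differential on $E_1$ then vanishes on these classes because shuffles commute with $b$ (Loday's theorem) and $B$ is closed and natural, so the spectral sequence degenerates at $E_2=E_\infty$. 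Re-assembling, $H_0$ and $H_1$ are each $\prod_n \F$, and the expression of an arbitrary class as an infinite sum $\sum_k c_k [sh^k]$ (resp.\ $\sum_k c_k [B^k]$) is well-defined in each Hochschild degree $i$ because $(sh^k)_i = 0$ for $i<k$, so only finitely many terms contribute in each component.

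The main obstacle will be the identification of $H_*(C_n)$ in the middle step. Recognising $C_n$ as a Hochschild-type complex of a polynomial algebra, constructing an explicit quasi-isomorphism to Kähler differentials that is compatible with the commutative PROP structure, and matching the shuffle cycle $sh^n\in\F[FinSet(n,n+1)]$ with the top Kähler differential (up to normalisation) are the key technical points. In particular, ruling out spurious homology in degrees $\ge 2$ will likely require a hands-on construction of a chain contraction on the normalised complex $\F[FinSet(n,-)]$, rather than an appeal to HKR as a black box; the combinatorics of shuffles vs.\ arbitrary maps of finite sets is where the real work lies.
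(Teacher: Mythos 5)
Your proposal contains a genuine gap at the central step: the claim that the fixed-source complex $C_n = \bigoplus_m \F[FinSet(n,m)][m-n]$ has homology concentrated in degrees $0$ and $1$ is false, and the two proposed justifications (HKR for a polynomial algebra, or a hand-built contraction on the normalised complex) would both be proving something untrue. Concretely, $\F[FinSet(n,m)] \cong \F[FinSet(1,m)]^{\otimes n}$, and $\F[FinSet(1,-)]$ with the target-side Hochschild faces is the chain complex of the simplicial circle $S^1_\bullet$. By Eilenberg--Zilber, the homology of $C_n$ is therefore $H_*(S^1)^{\otimes n}$, which has rank $2^n$ and is spread over degrees $0$ through $n$. (The paper records exactly this in Section~\ref{sec:conHH}: $\bC(\Com(k,-)) \simeq H_*(S^1)^{\otimes k}$.) So your $E_1$ page fills an entire quadrant rather than two rows, and the assertion that the sequence degenerates at $E_2 = E_1$ with rows $0$ and $1$ cannot stand: the source-side differential $d_1$ must kill the overwhelming majority of $E_1$, and your argument never shows this. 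Note also that a single $sh^n$ or $B^n$ is not an element of one $C_n$: it is a product over all Hochschild degrees $i$ with nonzero component $(sh^n)_i \in \Com(i+1,i+1)$ for every $i \ge n$, so it cannot serve as a representative of a class in a fixed $H_*(C_n)$. The HKR heuristic also points in the wrong direction: the relevant target algebra is the exterior-type algebra $H^*(S^1)$, not a polynomial ring, and it is the smallness of $HH_*(H^*(S^1))$ that makes the answer concentrate in two degrees.

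The viable version of your idea is essentially what the paper does, but one page later in the spectral sequence. After identifying the row complex $H_*(S^1)^{\otimes -}$ with its cosimplicial (source-side) differential, the paper dualises this to the ordinary Hochschild complex of $H^*(S^1)$ (Section~\ref{sec:conHH}), computes it directly in Section~\ref{sec:HHS1} to find it concentrated in degrees $0$ and $1$ with one generator $a^k$ resp.\ $b^k$ for each word-length $k$, and then uses the Alexander--Whitney map $Q$ (Section~\ref{sec:AW}) to show $Q(sh^k)$ and $Q(B^k)$ hit these generators upper-triangularly, giving the isomorphism of Theorem~\ref{Th:akbk}. Your filtration-by-$n$ is compatible with this, but you would still need: (i) the actual $E_2$ computation (cohomology of $\prod_n H_*(S^1)^{\otimes n}$ under the dual cup-product coboundary), (ii) a convergence argument for the half-plane product-totalised double complex (the paper cites Boardman via Propositions~\ref{prop:double} and~\ref{cor:chhtpy}), and (iii) the explicit matching of $sh^k, B^k$ to the $E_\infty$ generators, which is not automatic from "$sh^k$ is a cycle" because you also need surjectivity onto the limit term, handled in the paper by the triangular recursion in the proof of Theorem~\ref{Th:akbk}.
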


The shuffle product generalizes to a degree zero map $C(\Phi) \otimes C(\Phi) \to C(\Phi)$. In the second half of the paper, we generalize the above theorem to the iterated Hochschild construction and see:
\begin{Th}[{see Theorem \ref{th:mainmain}}]\label{Th:B}
The complex $\Nat_\Com(\oc{n_1}{m_1}, \oc{n_2}{m_2})$ is quasi-isomorphic to the product
\[\prod_{k_1, \cdots, k_{n_1}} A_{k_1, \ldots, k_{n_1}}\]
where the complexes $A_{k_1, \ldots, k_{n_1}}$ are spanned by objects build out of the $B^k$, $sh^k$ and the shuffle product in a procedure described in Definition \ref{def:elements}.
\end{Th}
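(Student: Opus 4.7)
The plan is to generalize Theorem \ref{Th:akbk} to the iterated Hochschild setting by analyzing the $n_1$ source directions one at a time, then handling the $n_2$ target directions via the shuffle product. The starting point is a combinatorial model: using the fact that $\Com$ is the free commutative symmetric monoidal category, the complex $\Nat_\Com(\oc{n_1}{m_1}, \oc{n_2}{m_2})$ can be identified, via a Yoneda-type argument, with an explicit chain complex whose generators are labelled by certain finite sets with extra structure in $n_1 + n_2$ simplicial directions, together with symmetric and cyclic data encoding the $m_i$. I would first make this model fully explicit, so that each of the $n_1$ source coordinates becomes a manifest tensor factor carrying its own simplicial/shuffle structure.

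Next, I would introduce a multi-filtration on $\Nat_\Com(\oc{n_1}{m_1}, \oc{n_2}{m_2})$ indexed by tuples $(k_1, \ldots, k_{n_1}) \in \N^{n_1}$, where $k_i$ records how many elements of the $i$-th tensor factor in the source are used nontrivially. The associated graded for this filtration decouples the $n_1$ source directions, so that for each fixed tuple $(k_1, \ldots, k_{n_1})$ the corresponding graded piece can be analyzed independently. The heart of the proof is to show that this piece is quasi-isomorphic to the complex $A_{k_1,\ldots,k_{n_1}}$ from Definition \ref{def:elements}. I would do this by induction on $n_1$, peeling off one source Hochschild direction at a time and invoking Theorem \ref{Th:akbk} as the base case, so that each generator produced by the induction is a product of $sh^{k_i}$'s and $B^{k_i}$'s in the source variables. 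The shuffle product then enters on the target side: a natural transformation with $n_2$-fold Hochschild output factorises, up to shuffle products, through operations with single Hochschild output, which is exactly what allows the $n_2$ target directions to be absorbed into the building blocks of Definition \ref{def:elements}; the parameters $m_1, m_2$ are incorporated through $B$ and degree shifts.

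At each step of the induction one must check that the comparison map lifts multiplicatively through the shuffle product, so that the inductive hypothesis can be applied to the remaining $n_1 - 1$ source coordinates; this reduces to verifying that the shuffle product commutes, up to explicit signs, with the various $sh^{k_i}$ and $B^{k_i}$, which is a direct calculation on the combinatorial model.

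The main obstacle I expect is the passage from the direct sum to the \emph{product} over $(k_1, \ldots, k_{n_1})$. One must show that an arbitrary natural transformation decomposes as a well-defined infinite family of basic operations and not just a finite sum. This relies precisely on the property stressed in the introduction that $sh^k$ vanishes in Hochschild degrees $< k$, so that an infinite product of operations from the $A_{k_1, \ldots, k_{n_1}}$ is, when evaluated in any fixed multi-bidegree of the source, a \emph{finite} sum. Matching the two sides as complete filtered complexes (rather than just filtered complexes), and therefore identifying the infinite products rather than infinite direct sums, will likely be the most delicate technical point; the argument should amount to checking that the filtration is exhaustive, Hausdorff and complete on both sides and that the induced map on associated gradeds is the quasi-isomorphism established in the previous step.
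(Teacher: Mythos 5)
Your starting point — the identification $\Nat_\Com(\oc{n_1}{m_1}, \oc{n_2}{m_2}) \cong D^{n_1}C^{n_2}(\Com(-,-))(m_2)(m_1)$ — matches the paper, and you correctly flag the product-versus-sum issue, which the paper handles through the conditional-convergence results for half-plane double complexes (Propositions \ref{prop:double} and \ref{cor:chhtpy}). But your central step has a genuine gap. You propose filtering by tuples $(k_1, \ldots, k_{n_1})$ (essentially the coHochschild degrees $h_i$) and assert that the associated graded ``decouples the $n_1$ source directions.'' This is not true as stated: passing to the associated graded kills the coHochschild (cosimplicial) differentials in the $h_i$ directions, but the Hochschild (simplicial) differentials in the $n_2$ target directions survive, and those differentials genuinely intertwine the source factors — a boundary $d_i$ on a target Hochschild coordinate acts diagonally and can merge inputs originating from distinct source Hochschild factors whenever their images are adjacent. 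No decomposition into a tensor product over the source index is visible at this stage. The paper obtains the decoupling only after several maneuvers your sketch omits: it uses Eilenberg--Zilber on the input side to replace $\Com(d_{\underline h},-)$ by a tensor product of $\Com(1,-)$'s and splits by functions $f:\{1,\ldots,d_{\underline h}\}\to\{1,\ldots,n_2+m_2\}$, splits off and contracts the acyclic summand $\widetilde{D}'$ coming from non-interval-constant $f$, and then applies Eilenberg--Zilber a second time on the output side so that the shuffle product appears as postcomposition by $x_2$.

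Two secondary points. The paper does not do an induction on $n_1$: it handles all source directions simultaneously via the tensor-factor decomposition of $\widehat{D}_f$, and since the surviving target differentials act on all source factors at once it is not clear how to set up your ``peel one source direction at a time'' induction without first carrying out the Eilenberg--Zilber reductions anyway. Also, the paper never needs (or proves) a commutation rule of the form ``shuffle product commutes up to sign with $sh^{k_i}$ and $B^{k_i}$''; in the paper's $x_{f,s}=x_2\circ x_1$ the shuffle products sit cleanly on the output side, so no such rearrangement is required.
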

The complex $\prod_{k_1, \cdots, k_{n_1}} A_{k_1, \ldots, k_{n_1}}$ has also an alternative description in terms of graph complexes. In \cite{kla13b} we define a complex of looped diagrams and a subcomplex of special tree-like looped diagrams $\tiplD(\oc{n_1}{m_1}, \oc{n_2}{m_2})$ together with a dg-map $\widetilde{J}_\Com:\tiplD(\oc{n_1}{m_1}, \oc{n_2}{m_2}) \to \Nat_\Com(\oc{n_1}{m_1}, \oc{n_2}{m_2})$ such that the image $\widetilde{J}_\Com$ is exactly the complex $\prod_{k_1, \cdots, k_{n_1}} A_{k_1, \ldots, k_{n_1}}$. In terms of this data, Theorem \ref{Th:B} can be nicely rewritten as follows:
\renewcommand{\theTh}{{B'}}
\begin{Th}
The dg-map $\widetilde{J}_\Com:\tiplD(\oc{n_1}{m_1}, \oc{n_2}{m_2}) \to \Nat_\Com(\oc{n_1}{m_1}, \oc{n_2}{m_2})$ is a quasi-isomorphism.
\end{Th}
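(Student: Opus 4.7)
The plan is to derive Theorem~B' from Theorem~B together with the description of the image of $\widetilde{J}_\Com$ provided in \cite{kla13b}. By construction, $\widetilde{J}_\Com$ factors as
\[
\tiplD(\oc{n_1}{m_1}, \oc{n_2}{m_2}) \xrightarrow{\widetilde{J}_\Com'} \prod_{k_1, \cdots, k_{n_1}} A_{k_1, \ldots, k_{n_1}} \hookrightarrow \Nat_\Com(\oc{n_1}{m_1}, \oc{n_2}{m_2}),
\]
where the middle complex is, by hypothesis, exactly the image of $\widetilde{J}_\Com$. Since Theorem~B says the inclusion on the right is a quasi-isomorphism, it suffices to show that the surjection $\widetilde{J}_\Com'$ is a quasi-isomorphism as well.

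The cleanest way to do this is to identify $\widetilde{J}_\Com'$ as an isomorphism of chain complexes on the nose. The iterative procedure of Definition~\ref{def:elements} produces each element of $A_{k_1, \ldots, k_{n_1}}$ by freely composing the generators $B^k$, $sh^k$, and the shuffle product, and each such composite should correspond to exactly one special tree-like looped diagram in $\tiplD$: elementary operations match elementary graph pieces, and iterated composition matches gluing along edges. First I would spell out this correspondence on generators. Then I would verify that the graph-theoretic differential on $\tiplD$ is transported to the operation-theoretic differential on $\prod A$ inherited from $\Nat_\Com$, which is already guaranteed by $\widetilde{J}_\Com$ being a dg-map.

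The main obstacle is verifying that the generator-level correspondence really descends to a bijection on the whole complex, i.e.\ that two syntactically different tree-like diagrams never encode the same composite operation. This amounts to a careful analysis of the operadic symmetries and the relations imposed both in the definition of $\tiplD$ and in the iterative construction of the spanning set of $A_{k_1, \ldots, k_{n_1}}$; the relations on the two sides must be shown to coincide. Once this bijectivity is established, $\widetilde{J}_\Com'$ is an isomorphism of complexes, and composing with the quasi-isomorphism of Theorem~B yields Theorem~B'.
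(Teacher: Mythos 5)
Your proposal is correct and takes essentially the same approach as the paper: Theorem~B' is obtained from Theorem~B by factoring $\widetilde{J}_\Com$ through its image $\prod_{k_1,\ldots,k_{n_1}} A_{k_1,\ldots,k_{n_1}}$ and applying Theorem~B to the inclusion of that image into $\Nat_\Com$. The one ingredient you flag as needing verification --- that the surjection onto the image is actually an isomorphism (no two distinct tree-like looped diagrams giving the same operation) --- is precisely what the paper delegates to \cite[Section 3]{kla13b} rather than re-proving here, so you have correctly located the only nontrivial input beyond Theorem~B.
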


Even though Theorem \ref{Th:A} is a special case of Theorem~\ref{Th:B}, we give a separate proof of it in the first half of the paper. Parts of the arguments used in the proof of Theorem~\ref{Th:B} are generalizations of those used in the proof of Theorem \ref{Th:A}.
\subsection*{Acknowledgements}
I would like to thank Tom Goodwillie for helpful conversations, in particular for bringing up the connections to the Hochschild homology of the sphere. Moreover, I would like to thank Martin W. Jacobsen for fruitful discussions on the combinatorics of the operations. Furthermore, I am very thankful to my advisor Nathalie Wahl for suggesting the topic and helpful discussions and comments. 
The author was supported by the Danish National Research Foundation through the Centre for Symmetry and Deformation (DNRF92).

\tableofcontents
\subsection*{Conventions}
Throughout the paper we fix a field $\F$ and work in the category $\Ch$ of chain complexes over $\F$. We use the usual sign convention on the tensor product, i.e. the differential $d_{V \otimes W}$ on $V \otimes W$ is defined as $d_{V \otimes W}(v \otimes w)=d_V(v) \otimes w + (-1)^{|v|}v \otimes d_W (w)$.

A \emph{dg-category} $\e$ is a category enriched over chain complexes, i.e. the morphism sets are chain complexes. In this paper we use composition from the right, i.e. we require the composition maps $\e(m,n) \otimes \e(n,p) \to \e(m,p)$ to be chain maps. A \emph{dg-functor} is an enriched functor $\Phi: \e \to \Ch$, so the structure maps
$\Phi(m) \otimes \e(m,n) \to \Phi(n)$
are chain maps.

For a chain complex $A$ we denote by $A[k]$ the shifted complex with $(A[k])_n=A_{n-k}$. Throughout the paper, the natural numbers are assumed to include zero.

\section{Recollection of definitions and basic properties}
We denote by $\Com$ the PROP of unital commutative algebras considered as a dg-PROP concentrated in degree zero. It is the dg-category with elements the natural numbers (including zero) and morphism spaces $\Com(m,n)=\F[FinSet(m,n)]$
the linearization of the maps of finite sets, where $m$ and $n$ denote the finite sets with $m$ and $n$ elements, respectively. The PROP $\Com$ is an example of a PROP with $A_\infty$-multiplication as used in \cite{wahl11} and \cite{wahl12}. Moreover, it also fits in the context of \cite{kla13c} where we consider PROPs with commutative multiplication. In the first of the aforementioned papers a more general construction of Hochschild homology was defined. Denoting the Hochschild complex of a dg-algebra $A$ by $C_*(A,A)$, this generalization allows us to define the complex of so-called formal operations which is a subcomplex of the operations
\[C_*(A,A)^{\otimes n_1} \otimes A^{m_1} \to C_*(A,A)^{\otimes n_2} \otimes A^{m_2}\]
natural in all commutative algebras $A$. This subcomplex is the complex we calculate in the paper. In this section, we recall the definition of the Hochschild complex for functors and the complex of formal operations.

\subsection{Hochschild and coHochschild complexes}\label{sec:hoch}
Recall that for a dg-algebra $A$ its Hochschild complex $C_*(A,A)$ is defined as
\[C_*(A,A) \cong \bigoplus_k A^{\otimes k}[k-1]\]
with differential coming from the inner differential on $A$ and the Hochschild differential which takes the sum over multiplying neighbors together (and an extra summand multiplying the last and first element). We start with generalizing this definition as it was done in \cite[Section 5]{wahl11}:

%
%Let $\e$ be a PROP with a commutative multiplication, i.e. a dg-PROP with a functor $\Com \to \e$ which is the identity on objects. 
Let $m^k_{i,j} \in \Com(k, k-1)$ be the map which multiplies the $i$--th and $j$--th input and is the identity on all other elements.
% Furthermore, for $j \geq k$, we take $j \mod k$, i.e. $m_{i, k+1}=m_{i,1}$.

For $\Phi: \Com \to \Ch$ a dg-functor the \emph{Hochschild complex }of $\Phi$ is the functor $C(\Phi): \Com \to \Ch$ defined by
%The Hochschild complex of a dg-functor $\Phi: \Com \to \Ch$  is defined in \cite[Def 5.1]{wahl11} by
\[ C(\Phi)(n)=\bigoplus_{k \geq 1} \Phi(k+n)[k-1]. \]
The sets $\Phi(k+1+n)$ for $k \geq 0$ form a simplicial abelian group with boundary maps $d_i=\Phi(m^{k+1}_{i+1, i+2} + \id_n)$ where we set $m^k_{k,k+1}=m^k_{k,1}$ and degeneracy maps induced by the map inserting a unit at the $i+1$--st position. Denoting the differential on $\Phi$ by $d_{\Phi}$, we define the differential on $C(\Phi)(n)$ to be the differential coming from these boundary maps which explicitly is given by 
 \[d(x)= d_{\Phi}(x) + (-1)^{|x|} \sum_{i=1}^k(-1)^{i}\Phi(m^k_{i,i+1} + id_n)(x).\]
Note that we used the formula $d=\sum_{i=0}^k (-1)^{i+1}d_i$ for the differential on the chain complex associated to a simplicial set instead of the usual choice $d=\sum_{i=0}^k (-1)^{i}d_i$. We do so to make the signs fit with the original definition in \cite[Section 5]{wahl11}.

The \emph{reduced Hochschild complex} $\bC(\Phi)(n)$ is the reduced chain complex associated to this simplicial abelian group, i.e. it is given by
\[ \bC(\Phi)(n)=\bigoplus \Phi(k+n) / U(k)\]
with $U(k)= \sum_{1 \leq i \leq k-1} im (u_i)$ where $u_i:\Phi(k-1+n) \to \Phi(k+n)$ is the map inserting a unit at the $(i+1)$--st position.

%\[ \bC(\Phi)(n)=\bigoplus \Phi(k+n)/ U(k)\]
%with $U(k)= \sum_{2 \leq i \leq k} im (u_i)$ where $u_i:\Phi(k-1+n) \to \Phi(k+n)$ is the map inserting a unit at the $i$--th position.

Iterating this construction, the complexes $C^{(n,m)}(\Phi)$ and $\bC^{(n,m)}(\Phi)$ are given by 
\begin{align*}C^{(n,m)}(\Phi):=C^n(\Phi)(m) &&\text{and} &&\bC^{(n,m)}(\Phi):=\bC^n(\Phi)(m).\end{align*}
Working out the definitions explicitly we obtain
\[C^{(n,m)}(\Phi) \cong \bigoplus_{j_1 \geq 1, \cdots, j_n \geq 1} \Phi(j_1+ \cdots+ j_n + m)[j_1+ \cdots +j_n-n].\]

Before we move on to the coHochschild construction, we want to connect the above definition to the ordinary Hochschild complex of a commutative algebra:

Unital commutative dg-algebras correspond to strong symmetric monoidal functors $\Phi: \Com \to \Ch$ by sending an algebra $A$ to the functor $\Phi(n)=A^{\otimes n}$ and vice versa. Then the Hochschild complex is given by
\[C_*(A^{\otimes -}) = \bigoplus_{k \geq 1} A^{\otimes k}[k-1] \cong C_*(A,A)\]
which is isomorphic to the ordinary Hochschild complex of an algebra. Using the strong monoidality again, we obtain
\[C^{(n,m)}(A^{\otimes -})\cong C_*(A,A)^{\otimes n} \otimes A^{\otimes m}\]
and similarly for the reduced versions.

Dually, given a dg-functor $\Psi: \Com^{op} \to \Ch$ its \emph{CoHochschild complex} is defined as
\[ D(\Psi)(n)= \prod_{k \geq 1} \Psi(k+n)[1-k]\] 
with the differential coming from the cosimplicial structure induced by the multiplications, so for $y \in \prod_{k \geq 1} \Psi(k+n)$ it is given by
\[d(y)_l=(-1)^{l+1}(d_{\Psi}(y_l)-\sum_{i=1}^{l+1}(-1)^{i}\Psi(m^{l+1}_{i,i+1} + id_n)(y_{k-1}))\]
(see \cite[Section 1]{wahl12}). As for the Hochschild construction, we twisted the differential coming from the cosimplicial structure maps by $-1$.

Again, we can take the reduced cochain complex $\bD(\Psi)(n)$  which is the subcomplex
\[\bD(\Psi)(n)= \prod_{k\geq 1}\bigcap_{i=2}^{k} ker( u_i).\]

By \cite[Prop. 1.7 + 1.8]{wahl12}, the inclusion $\bD(\Psi) \to D(\Psi)$ and the projection $C(\Phi) \to \bC(\Phi)$ are quasi-isomorphisms. 

Again, under the correspondence of counital cocommutative coalgebras and strong monoidal functors $\Psi: \Com^{op} \to \Ch$, the coHochschild construction defined above is isomorphic to the ordinary coHochschild construction of a coalgebra.

Furthermore, we can also spell out the iterated construction explicitly, i.e. for a functor $\Psi: \Com^{op} \to \Ch$ we get
\[D^n(\Psi)(m) \cong \prod_{j_1, \cdots, j_n} \Psi(j_1 + \cdots + j_n +m)[n-(j_1+ \cdots +j_n)].\]

The coHochschild construction is in some way dual to the Hochschild construction. Precisely, we have:
\begin{Prop}[{\cite[Prop 2.11]{kla13c}}]\label{Prop:dual}
Let $\Phi: \Com \to \Ch$ be a dg-functor.
Then
\[ (C(\Phi))^* \cong D(\Phi^*)\]
where $\Phi^*: \Com^{op} \to \Ch$ is the dual functor, i.e. $\Phi^*(m)=(\Phi(m))^*$.
The same holds in the reduced versions.
\end{Prop}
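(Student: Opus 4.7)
The plan is to establish the isomorphism in two stages: first identify the underlying graded vector spaces, then verify that the differentials correspond.

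First I would show that the underlying graded vector spaces coincide. Dualization sends direct sums to products and interacts with the shift by $(A[k])^* \cong A^*[-k]$, so
\[
(C(\Phi)(n))^* \;=\; \Bigl(\bigoplus_{k \geq 1} \Phi(k+n)[k-1]\Bigr)^* \;\cong\; \prod_{k \geq 1} (\Phi(k+n))^*[1-k] \;=\; \prod_{k\geq 1} \Phi^*(k+n)[1-k],
\]
which is by definition $D(\Phi^*)(n)$. Call this identification $\iota$.

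Next I would check that $\iota$ intertwines the differentials. The differential on $C(\Phi)(n)$ has two contributions: the internal piece $d_\Phi$ and the simplicial piece $\sum_i (-1)^i \Phi(m^k_{i,i+1}+\id_n)$ (weighted by an extra sign from the shift). Under dualization the internal piece $d_\Phi$ becomes $d_{\Phi^*}$ by definition of the dual chain complex. For the simplicial piece, each $\Phi(m^k_{i,i+1}+\id_n)\colon \Phi(k+n) \to \Phi(k-1+n)$ dualizes to a map $\Phi^*(k-1+n) \to \Phi^*(k+n)$, which by functoriality of $\Phi^*\colon \Com^{op}\to\Ch$ coincides with $\Phi^*(m^k_{i,i+1}+\id_n)$. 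These are precisely the cosimplicial coface maps appearing in $D(\Phi^*)$. Thus, up to signs, the dual differential on $(C(\Phi)(n))^*$ matches the coHochschild differential on $D(\Phi^*)(n)$.

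The main obstacle is the bookkeeping of signs. One must verify that the sign $(-1)^{l+1}$ in front of the cosimplicial sum in $D(\Psi)$, together with the internal sign $(-1)^i$, is exactly what one obtains from dualizing the Hochschild differential and tracking the Koszul signs produced by the shift $[k-1]\mapsto[1-k]$ and by the rule $d_{V^*}(f)=\pm f\circ d_V$. I would carry this out component by component: fix $y \in \Phi^*(k+n)[1-k]$ viewed as a functional on the $k$-th summand of $C(\Phi)(n)$, apply the dual of the Hochschild differential, and compare the resulting coefficients with the explicit formula for $d$ in $D(\Phi^*)$. Because both sign conventions in the paper were modified in the same way from the standard simplicial and cosimplicial conventions (both twisted by $-1$), the match goes through cleanly.

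Finally, for the reduced statement, I would dualize the short description $\bC(\Phi)(n)_k = \Phi(k+n)/U(k)$ where $U(k)=\sum_i \operatorname{im}(u_i)$. Since for any $\F$-linear map $u$ the annihilator of $\operatorname{im}(u)$ equals $\ker(u^*)$, and since $u_i^*$ acting on $\Phi^*(k+n)$ is exactly the map used in the definition of $\bD$, dualizing the surjection $\Phi(k+n)\twoheadrightarrow \Phi(k+n)/U(k)$ gives the injection
\[
\bigcap_{i=2}^{k}\ker(u_i^*)\;\hookrightarrow\;\Phi^*(k+n),
\]
which identifies the $k$-th factor of $\bD(\Phi^*)(n)$ with $(\bC(\Phi)(n)_k)^*$. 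Taking the product over $k$ and combining with the first part of the argument gives the reduced isomorphism. Compatibility with differentials is inherited from the unreduced case since $\iota$ restricts to the reduced subcomplexes.
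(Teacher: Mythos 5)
Your proposal is correct and follows the same approach as the paper, whose entire proof is the single observation that the dual of a direct sum is the product of the duals; you simply unwind this explicitly, including the shift bookkeeping, the dualization of the simplicial structure maps into cosimplicial ones, and the annihilator/kernel argument for the reduced case.
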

This follows from the fact that the dual of a direct sum is the product of the dual spaces. 

\subsection{Formal operations}\label{sec:form}
The complex of formal operations $\Nat_\Com(\oc{n_1}{m_1},\oc{n_2}{m_2})$ is defined as
\[\Nat_\Com(\oc{n_1}{m_1},\oc{n_2}{m_2}):=\hom(C^{(n_1,m_1)}(\Phi), C^{(n_2,m_2)}(\Phi))\]
natural in all functors $\Phi: \Com \to \Ch$.

In \cite[Theorem 2.1]{wahl12} it was shown that  
\[\Nat_\Com(\oc{n_1}{m_1},\oc{n_2}{m_2}) \cong D^{n_1}C^{n_2}(\Com(-,-))(m_2)(m_1).\]

Since every graded commutative algebra $A$ defines a strong symmetric monoidal functor $A^{\otimes -}: \Com \to \Ch$, every element in $\Nat_\Com(\oc{n_1}{m_1}, \oc{n_2}{m_2})$ gives an operation 
\[C^{n_1,m_1}(A^{\otimes -}) \cong C_*(A,A)^{\otimes n_1} \otimes A^{\otimes m_1} \to C_*(A,A)^{\otimes n_2} \otimes A^{\otimes m_2} \cong C^{n_2,m_2}(A^{\otimes -}).\]
More precisely, defining $\Nat_\Com^\otimes(\oc{n_1}{m_1},\oc{n_2}{m_2})$ to consist of those transformations which are natural in all commutative dg-algebras $A$, we get a restriction functor $r: \Nat_\Com(\oc{n_1}{m_1},\oc{n_2}{m_2}) \to \Nat_\Com^\otimes(\oc{n_1}{m_1},\oc{n_2}{m_2})$. Since $\Com$ is the PROP coming from an operad, by \cite[Section 2.2]{wahl12} the map $r$ is injective.

\section{The homology of $\Nat_\Com$ for $n_1=n_2=1$ and $m_1=m_2=0$}
%
%\addtocontents{toc}{\SkipTocEntry}
%
%\section{The homology of
% $\Nat_\Com({\oc{1}{0}, \oc{1}{0}})$}
%\addtocontents{toc}{\protect\contentsline{section}%
%  {\protect\tocsection{}{\thesection}%
%    {The homology of $\Nat_\Com$ for $n_1=n_2=1$ and $m_1=m_2=0$}}%
%  {\thepage}} 
%$\Nat_\Com({\oc{1}{0}, \oc{1}{0}})$
%\todo{title!}
Before starting with the actual calculations, we give a short plan of this section:
In Section \ref{sec:conHH} we start with proving a quasi-isomorphism \[Q:\Nat_\Com(\oc{1}{0}, \oc{1}{0}) \to (\bC_*(H^*(S^1), H^*(S^1)))^*,\]
where $\bC_*(H^*(S^1), H^*(S^1))$ is the reduced ordinary Hochschild complex of the algebra $H^*(S^1)$ and $(-)^*$ is the linear dual of this complex.
In Section \ref{sec:HHS1} we then compute $(\bC_*(H^*(S^1), H^*(S^1)))^*$, which is trivial away from degree $0$ and $1$ and otherwise an infinite product over spaces spanned by generators $a_k$ and $b_k$, respectively. This already assures that the homology of $\Nat_\Com(\oc{1}{0}, \oc{1}{0})$ is trivial away from degree zero and one. In Section \ref{sec:loda} we recall Loday's shuffle operations $sh^k$ and define operations $B^k$, which will be shown to be the building blocks of $\Nat_\Com(\oc{1}{0}, \oc{1}{0})$. In Section \ref{sec:AW} we compute the images $Q(sh^k)$ and $Q(B^k)$ which we then in Section \ref{sec:res} use to prove that the product over $sh^k$ 
spans the zero-th homology of $\Nat_\Com(\oc{1}{0}, \oc{1}{0})$ and the product of $B^k$ the first homology of this space.

\subsection{The formal operations via the Hochschild chains of $H^*(S^1)$}\label{sec:conHH}

Recall form Section \ref{sec:form} that we have $\Nat(\oc{1}{0}, \oc{1}{0}) \cong DC(\Com(-,-))$. Explicitly, this means that in degree $l$ we obtain
\[\Nat(\oc{1}{0}, \oc{1}{0})_l \cong DC(\Com(-,-))_l \cong \prod_{k \geq 0} \Com(k+1, k+l+1).\]
An element $f \in \prod_k \Com(k+1, k+l+1)$ acts on $\bigoplus_i \Phi(i+1)$ by applying $f_k$ to $\Phi(k+1)$. We will use the same notation for the element in $DC(\Com(-,-))$ and $\Nat(\oc{1}{0}, \oc{1}{0})$.

In this first part, we want to recall the methods from \cite{kla13c} to give an interpretation of this space by its cosimplicial simplicial abelian group structure.
Hence, we recall that by the definition of the commutative PROP we have $\Com(k,l) = \F[FinSet(k,l)] \cong \F[(FinSet(1,l)^{\times k}]$.
Thus as vector spaces, we have an isomorphism $\Com(k,l) \cong \Com(1,l)^{\otimes k}$. Viewing the Hochschild construction $C(\Com(k,-))$ as a simplicial abelian group, the $l$--th level is given by $\Com(k,l+1) \cong \Com(1,l+1)^{\otimes k}$ and the boundary maps are given by post composition with the multiplications of neighbors in $\Com(l+1,l)$, which acts diagonally on the space $\Com(1,l+1)^{\otimes k}$. On the other hand, $FinSet(1,l+1)$ is the standard model for the simplicial circle with one non-degenerate zero- and one non-degenerate one-cell, 
 i.e. $H_*(C_*(FinSet(1,l))) \cong H_*(S^1)$. We denote $S^1_\bullet= FinSet(1, \bullet+1)$. Hence we can rewrite the reduced Hochschild construction as
\[\bC(\Com(k,-))\cong \bC_*((S^1_\bullet)^{\times k}) \stackrel{AW}{\simeq} \bC_*((S^1_\bullet))^{\otimes k} \cong H_*(S^1)^{\otimes k},\]
where $AW$ is the Alexander-Whitney map (cf. \cite[Section 8.5.4]{weib95}) and the last equivalence follows from all differentials in $\bC_*((S^1_\bullet))$ being trivial. 

Applying the coHochschild construction, out of $\bC(\Com(-,-))$ we form a cosimplicial abelian group whose coboundary maps are given by precomposition with multiplication. Under the above isomorphism $\bC(\Com(k,-))\cong \bC_*(\Com(1,-)^{\times k}) $ this corresponds to doubling the information of the $i$--th input, i.e. it is given by the $i$--th diagonal map. Hence on the chain complex $\bC_*((S^1_\bullet))^{\otimes k} \cong H_*(S^1)^{\otimes k}$ it is defined by first applying the diagonal map to the $i$--th factor and then post composing with the Alexander Whitney map. Since $H_*(S^1)$ is degree-wise finite, its double dual is the same space again and hence the diagonal composed with the Alexander Whitney map is given by the dual of the $i$--th cup product. In conclusion, we have 
\[\bC(\Com(k, -)) \simeq H_*(\Com(k, -))^{\otimes k} \cong (H^*(S^1_\bullet)^{\otimes k})^* \]
and the cosimplicial structure on $(H^*(S^1_\bullet)^{\otimes k})^*$ is given by the dual of the cup-product. Moreover, by \cite[Cor. 1.5]{wahl12} the coHochschild construction is invariant under quasi-isomorphism, i.e. a quasi-isomorphism of functors induces a quasi-isomorphism of their coHochschild complexes. In particular, $\bD(\bC(\Com(-, -)) \simeq \bD((H^*(S^1_\bullet)^{\otimes -})^*)$. Computing $D(\bC(\Com(k, -)))$ and using Proposition \ref{Prop:dual} gives a chain of morphisms
\[Q:\bD(\bC(\Com(-, -)) \simeq \bD((H^*(S^1)^{\otimes -})^*) \cong (\bC(H^*(S^1)^{\otimes -}))^* \cong (\bC_*(H^*(S^1), H^*(S^1)))^*\]
with $\bC_*(H^*(S^1), H^*(S^1))$ the ordinary reduced Hochschild chains of the commutative algebra $H^*(S^1)$.
The quasi-isomorphism $Q$ is the double dual of the Alexander-Whitney map. In the next sections we will give generators of $(\bC_*(H^*(S^1), H^*(S^1)))^*$ and elements in $\Nat(\oc{1}{0}, \oc{1}{0})$ which are mapped to these under $Q$. 

\subsection{The Hochschild chains of $H^*(S^1)$}\label{sec:HHS1}

We compute the reduced Hochschild cochains of $H^*(S^1)$ explicitly: We view $H^*(S^1)$ sitting in degree $-*$ and denote the unit in $H^0(S^1)$ by $1$ and the generator in $H^1(S^1)$ by $x$. 
We have elements of the form $1 \otimes x^{\otimes k}$ in $(\bC(H^*(S^1), H^*(S^1)))_0$ and $x \otimes x^{\otimes k}$ in $(\bC(H^*(S^1), H^*(S^1)))_{-1}$. These are all generators in the reduced complex. Moreover, one easily checks that all elements have trivial differential. In terms of the word-length filtration, we can write
\[(\bC(H^*(S^1), H^*(S^1)))_l = \begin{cases} \bigoplus_{k} \langle 1 \otimes x^{\otimes k} \rangle_{\F} &\text{if }l=0\\ 
  \bigoplus_{k} \langle x \otimes x^{\otimes k} \rangle_{\F} & \text{if } l =-1\\ 0 & \text{else.} \end{cases} \] %
%We also have $\bC_*(H^*(S^1), H^*(S^1))=0$ for $* \neq 0, -1$. 
%%The subvectorspaces of $(\bC_*(H^*(S^1), H^*(S^1))$ which lead to the filtration of the dual given corresponding to the filtration by coluns, is the filtration by word-length, i.e. $(F_s)_0 = \bigoplus_{k \geq s} 1 \otimes x^{\otimes k}$ and  $(F_s)_{-1} = \bigoplus_{k \geq s} x \otimes x^{\otimes k}$.
%We describe a way to construct out of the finite dual (i.e. the direct sum instead of the product) which actually is the Hochschild cohomology of $H^*(S^1)$ with coefficients in $H_*(S^1)$) the non-finite dual. Recall the space $T^m(A^*_*)$ for some cosimplicial simplicial set. These are actually the quotients under the filtration by the $F_s$. If we can find a basis for each $T^m$ wich commutes with the projections, then we found a description of the limit by taking the limit of a general element. Since all the differentials are zero,
%we actually have
%\[ T^m((\bC_*(H^*(S^1), H^*(S^1)))^*)=\bigoplus_{k \leq m} \bC_k(H_*(S^1)^{\otimes -})\]
The dual of 
$\bC_*(H^*(S^1), H^*(S^1))$ again has trivial differential and since the dual of a direct sum is the direct product, we have
\[((\bC_*(H^*(S^1), H^*(S^1)))^*)_l = \begin{cases} \prod_{k} \langle (1 \otimes x^{\otimes k})^* \rangle_{\F} &\text{if }l=0\\ 
 \prod_{k} \langle (x \otimes x^{\otimes k})^* \rangle_{\F}& \text{if } l =1\\ 0 & \text{else.} \end{cases} \]

Under the identification of $(H^*(S^1_\bullet))^*$ with $H_*(S^1_\bullet)$ which sends $1^*$ to the generator in degree zero (which we also denote by $1$) and $x^*$ to the generator in degree $1$ denoted by $y$, we can identify $(1 \otimes x^k)^*$ with $1 \otimes y^{\otimes k}$ and $(x \otimes x^k)^*$ with $y \otimes y^k$. Define $a^k=1 \otimes y^{\otimes k}$ and $b^k=y \otimes y^{\otimes k}$, so we obtain
\[((\bC_*(H^*(S^1), H^*(S^1)))^*)_l \cong  \begin{cases} \prod_{k} \langle a^k \rangle_{\F} &\text{if }l=0\\ 
 \prod_{k} \langle b^k \rangle_{\F} & \text{if } l =1\\ 0 & \text{else.} \end{cases} \]

\subsection{Loday's lambda and shuffle operations}\label{sec:loda}

In this section we recall Loday's $\lambda$- respectively shuffle operations and give a short recap on their construction. Loday's operations, which can be defined over $\Z$, can be seen as a generalization of the Gerstenhaber-Schack idempotents $e^n$ which can only be defined over $\Q$ (cf. \cite{gers87}) and are a refinement of an element defined by Barr in \cite{barr68}. These idempotents were used to define a Hodge decomposition of Hochschild and cyclic homology and  any natural operation which acts on each Hochschild degree separately and which has trivial differential can be written as a linear combination of these operations. However, in \cite[Prop. 2.8]{loda89} it was shown how to recover these idempotents from Loday's operations.

We start with the definition of the Euler decomposition of the symmetric group $\Sigma_{n}$ as given in \cite{loda89}. For a permutation $\sigma \in \Sigma_n$ a descent is a number $i$ such that $\sigma(i) > \sigma(i+1)$. Then one defines 
\[\Sigma_{n,k}:=\{ \sigma \in \Sigma_n \ | \ \sigma \text{ has $k-1$ descents} \}.\]
To construct the operations, we notice that every element $\sigma \in \Sigma_n$ defines an element in $\Com(n+1, n+1)=\F[FinSet(n+1, n+1)]$ by the embedding of $\Sigma_n$ into $\Sigma_{n+1}$ which sends $\sigma$ to the permutation which leaves $1$ fixed and applies the permutation $\sigma$ to the elements $\{2, \cdots, n+1\}$. We denote the image of $\Sigma_{n,k}$ in $\Sigma_{n+1}$
%$\Sigma^1_{n+1}=\{ \sigma \in \Sigma_{n+1} \ | \ \sigma(1)=1 \}$
 by $\Sigma^1_{n+1,k}$.
 
In   \cite{loda89},  up to a sign twist, the operations $l_n^k$ were defined as
\[l_n^k:= \sum_{\sigma \in \Sigma^1_{n+1,k}} sgn(\sigma) \sigma\]
for $n \geq 1$ and $1 \leq k \leq n$, $l_0^0=1$ and $l_n^k=0$ else.
Out of these, two families of operations were constructed, the $\lambda$- and shuffle operations:
\[\lambda_n^k = \sum_{i=0}^k \binom{n+k-i}{n} l^i_n\]
for all $n,k$ and
\[sh_n^k = \sum_{i=1}^k \binom{n-i}{k-i} l^i_n\]
for  $n\geq 1$ and $1 \leq k \leq n$, $sh_0^0=\id$ and $sh_0^k=sh_n^0=0$ for $k>0$ and $n>0$.
For $n \geq 1$ we obtain $sh_n^1=\id$.
For $n \geq 1$ and $k \geq 2$ the shuffles can be seen via another combinatorial description: For each $k$ consider all $(p_1, \cdots, p_k)$-shuffles in $\Sigma_{n}$ with $p_1+ \cdots p_k=n$ and all $p_j \geq 1$. As above, we can embed them into $\Sigma_{n+1}$ by applying the permutation to $\{2, \cdots, n+1\}$ and leaving $1$ fixed. Taking the sum over all the images (with sign), we obtain $sh_n^k$. We write $\lambda^k=\prod_n\lambda_n^k$ and $sh^k=\prod_n sh_n^k$ for the products in $\prod_n \Com(n+1,n+1)$. In particular, both define families of formal operations in $\Nat(\oc{1}{0}, \oc{1}{0})_0$.

%\todo{Comment on Idempotents and other guys}
The elements $\lambda^k$ lie in the span of the $sh^k$, more precisely
\begin{align}\label{eq:shlambda} \lambda^k=\sum_{m=0}^k \binom{k}{m} sh^m.\end{align} 
The shuffle operations can also be expressed in terms of the lambda operations as
\begin{align}\label{eq:lambdash} sh^k=\sum_{m=0}^k (-1)^{k-m} \binom{k}{m} \lambda^m.\end{align}

A special property of the $sh^k$ is that $sh^k_n=0$ for $n <k$. This allows us to take infinite sums $\sum_{k=0}^\infty c_k sh^k$ and still obtain a well-defined element in $ \prod_n \Com(n+1,n+1) \cong \Nat(\oc{1}{0}, \oc{1}{0})_0$, since in each degree only finitely many terms are nonzero. This is not possible for the $\lambda^k$ which is the reason why we need to work with the $sh^k$. 

\begin{Remark} A nice property of the $\lambda^k$ is their multiplicative behavior. It is shown in \cite[Theorem 1.7]{loda89} that 
\[\lambda^k \cdot \lambda^{k'} = \lambda^{k k'}.\]
Together with equations \eqref{eq:lambdash} and \eqref{eq:shlambda} we can extract a formula for the multiplication of the shuffle elements and get
\[sh^k \cdot sh^{k'}= \sum_{i=0}^k \sum_{i'=0}^{k'} \sum_{j=0}^{ii'} (-1)^{k+k'-(i+i')} \binom{k}{i} \binom{k'}{i'} \binom{i i'}{j} sh^j.\]
\end{Remark}

As a next step we see that the operations are actually cycles in $\Nat(\oc{1}{0}, \oc{1}{0})$. The $j$--th part of the differential is given by $d(x)_j=(-1)^j (d_h(x)_j -d^{co}(x)_j)$ with $d_h(x)=\sum (-1)^{i+1} d_i(x)$ and $d^{co}(x)=\sum (-1)^{i+1} d^i(x)$.
\begin{Prop}[{\cite[Proposition 2.3., Cor. 2.5.]{loda89}}]
The following holds:
\[d_h(l_n^k)=d^{co}(l^k_{n-1}-l^{k-1}_{n-1})\]
and thus
$d(\lambda^k)=0$ and $d(sh^k)=0$.
\end{Prop}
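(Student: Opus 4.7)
The claim has two parts: the structural identity $d_h(l_n^k)=d^{co}(l_{n-1}^k-l_{n-1}^{k-1})$, and the corollary that $\lambda^k$ and $sh^k$ are cycles. My plan is to first establish the structural identity by a descent-tracking argument on permutations, and then derive the corollary by a routine binomial manipulation.

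For the structural identity, the plan is to expand both sides as signed sums over permutations and compare. On the left, $d_h(l_n^k)$ is obtained by post-composing each $\sigma\in\Sigma^1_{n+1,k}$ with the multiplications $m^{n+1}_{i,i+1}$. Concretely, $m^{n+1}_{i,i+1}\circ\sigma$ coincides with the map of finite sets obtained by identifying the two positions in the target labelled $i$ and $i+1$; this is governed by where $i$ and $i+1$ lie in the word $(\sigma(1),\dots,\sigma(n+1))$ and, crucially, whether they are adjacent. On the right, $d^{co}(l_{n-1}^{k})$ and $d^{co}(l_{n-1}^{k-1})$ come from pre-composition with $m^{n+1}_{i,i+1}$, which at the level of permutations duplicates the input at position $i$ and inserts the duplicate next to it. The idea is to set up a sign-preserving bijection between the terms contributing to $d_h(l_n^k)$ and to $d^{co}(l_{n-1}^k-l_{n-1}^{k-1})$, keyed to the pair $(i, \sigma)$. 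Permutations whose image under the face map lose exactly one descent match one family; those whose descent count stays the same match the other, which is why we subtract $l_{n-1}^{k-1}$ on the right.

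The main obstacle is bookkeeping the descents and signs correctly under the embedding $\Sigma_n\hookrightarrow\Sigma_{n+1}$ that fixes $1$; one has to be careful to restrict to faces $d_i$ and cofaces $d^i$ that respect this fixed first slot (i.e.\ $i\geq 1$), and to check the wrap-around face $d_0=d_{n+1}$ separately since the definition of $m^k_{k,k+1}$ uses the cyclic convention. After that, the identity is essentially Loday's \cite[Prop.\ 2.3]{loda89}, and I would either import it or carry out the bijective argument sketched above.

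For the corollary, I would plug the structural identity into the defining linear combinations. Using $\lambda^k_n=\sum_{i=0}^k\binom{n+k-i}{n}l_n^i$, one gets
\[
d_h(\lambda^k_n)=\sum_{i=0}^k\binom{n+k-i}{n}\,d^{co}\bigl(l_{n-1}^i-l_{n-1}^{i-1}\bigr)=d^{co}\Bigl(\sum_{i=0}^k\bigl[\tbinom{n+k-i}{n}-\tbinom{n+k-i-1}{n}\bigr]\,l_{n-1}^i\Bigr),
\]
and Pascal's identity $\binom{n+k-i}{n}-\binom{n+k-i-1}{n}=\binom{n+k-i-1}{n-1}$ collapses the bracket to $\lambda^k_{n-1}$, so $d_h(\lambda^k_n)=d^{co}(\lambda^k_{n-1})$, which is precisely the statement that $d(\lambda^k)_n=0$. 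The same telescoping, now with the coefficients $\binom{n-i}{k-i}$, yields $d_h(sh^k_n)=d^{co}(sh^k_{n-1})$ and therefore $d(sh^k)=0$.
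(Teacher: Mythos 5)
Your proposal is correct and follows the same essential route as the paper: the key identity $d_h(l_n^k)=d^{co}(l_{n-1}^k-l_{n-1}^{k-1})$ is imported from Loday in both cases (the paper states the whole proposition as a direct citation of Loday's Prop.\ 2.3 and Cor.\ 2.5, with no proof given). Your sketch of a sign-preserving bijection between face terms of $d_h(l_n^k)$ and coface terms of $d^{co}(l_{n-1}^k-l_{n-1}^{k-1})$ is a reasonable plan for re-deriving that identity, but as you acknowledge it is only a sketch and you rightly defer to Loday. Where your write-up actually adds content is in spelling out the ``and thus'' step that the paper leaves implicit: your telescoping computation is correct --- the boundary term at $i=k$ works out because $\binom{n-1}{n}=0$, Pascal collapses the bracket to $\binom{n+k-i-1}{n-1}$, giving $d_h(\lambda^k_n)=d^{co}(\lambda^k_{n-1})$, and the analogous computation with $\binom{n-i}{k-i}$ and $\binom{n-i-1}{k-i}$ gives $d_h(sh^k_n)=d^{co}(sh^k_{n-1})$. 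Combined with the paper's formula $d(x)_j=(-1)^j\bigl(d_h(x)_j-d^{co}(x)_j\bigr)$, this is exactly $d(\lambda^k)=0$ and $d(sh^k)=0$.
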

\begin{Remark}
In \cite{kla13c} we explain that generalizing the Hochschild and coHochschild constructions for higher Hochschild homology, for a simplicial set $X_\bullet$ and a topological space $Y$ we have a map
\[\bC_*(Hom_{Top}(|X_\bullet|,Y)) \to \bD_{X_\bullet}\bC_{s_\bullet(Y)}(\Com(-,-))\]
with $s_\bullet(Y)$ the singular chains on $Y$. If the dimension of $X_\bullet$ is smaller or equal the connectivity of $Y_\bullet$ this map is a quasi-isomorphism.
Moreover, we always have a quasi-isomorphism $\bD_{X_\bullet}\bC_{Y_\bullet}(\Com(-,-)) \to \bD_{X_\bullet}\bC_{s_\bullet(Y)}(\Com(-,-))$.

Putting both simplicial sets equal to $S^1_\bullet$ we recover the original Hochschild construction. However, in this case the map is not a quasi-isomorphism.
In \cite[Note 2.4]{mcca93} McCarthy explains that under this map the so-called $k$--power operations in $Hom_{Top}(S^1,S^1)$ which loop a circle $k$--times around itself, are mapped to the $\lambda_k$--operations defined above.
\end{Remark}

We move on to the definition of a second family of elements $B^k$ which will be used to build the degree one part of the homology of $\Nat(\oc{1}{0}, \oc{1}{0})$. We start with the definition of the BV-operator $B \in \prod_l \Com(l, l+1)$, which as an operation on Hochschild chains corresponds to the well-known Connes' boundary operator. Precomposing this element with the already constructed elements $sh^k$ we obtain the elements we are looking for.

\begin{Def}
The element $B \in \prod_l \Com(l, l+1)$ has as its $l$-th component $B_l \in \Com(l, l+1)$,  defined as
\[B_l= \sum_{i=1}^l (-1)^{i(l+1)} g_i\] with 
\[g_i(t)= \begin{cases} t+i+1 &\text{if } t+i+1 \leq l+1\\
t+i-l & \text{else,} \end{cases}\]
i.e. $g_i^{-1}(1)= \emptyset$ and we sum over all cyclic permutations of the set $\{2, \cdots, k+1\}$. 

Define $B^k$ as $B \circ sh^k $. 
\end{Def}
%Note, that this is the same as summing over all $g_i$ with  $g_i^{-1}(1)= \emptyset$ and winding number $1$.

By the usual computations, one sees that $B$ is a cycle. Thus, since the composition of cycles is a cycle, the elements $B^k$ are cycles, too.

The elements $B^k$ can be described explicitly similarly to the elements $sh^k$:

We consider the $n$ embeddings of $\Sigma_n \to \Com(n,n+1)$ given by composition of maps $\Sigma_n \to \Com(n,n)$ with the embedding of $\Com(n,n)$ into $\Com(n,n+1)$ not hitting the first element untouched, where the $l$--th map from $\Sigma_n$ to $\Com(n,n)$ is given by  adding $l$ (modulo $n$) to the image of the permutations. We denote the union of the images of these embeddings of $\Sigma_{n,k}^1$ in $\Com(n,n+1)$ by $\Sigma_{n,k}^+$.

Then we can define 
\[R_n^l:= \sum_{g \in \Sigma_{n+1,l}^+ } sgn(g) g\]
and obtain $(B^k)_n = \sum_{l=1}^{k} \binom{n-l}{k-l} \ R_n^l$ for $k >0$, $(B^{0})_0=R_0^1$ and $(B^{0})_i=0$ for $i \neq 0$.

\subsection{The Alexander-Whitney map of permutations}\label{sec:AW}
We compute now the image of the $sh^k$ and $B^k$ under the map $Q$ described above in terms of the generators $a^i$ and $b^i$ defined in Section \ref{sec:HHS1}. Recall that we have an identification $\Com(n+1, n+1) \cong C_n(S^1)^{\times n+1}$. 

\begin{Lemma}
Let $\sigma$ be a permutation in  $\Sigma_{n+1}$ with $\sigma(1)=1$. We consider its image in $\bC_{n}((S^1)^{\times n+1})$ under the projection from $C_n(S^1)^{\times n+1}$. Then 
 $AW(\sigma)=0$ if $\sigma \neq \id$ and $AW(\id_{n+1})=1 \otimes \underbrace{y \otimes \cdots \otimes y}_{n}$.
\end{Lemma}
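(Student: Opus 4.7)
The plan is to expand the iterated Alexander--Whitney map explicitly and then use degeneracy vanishing in the reduced complex to force $\sigma = \id$. Under the identification $\Com(n+1, n+1)_n \cong ((S^1_\bullet)^{\times (n+1)})_n$, a permutation $\sigma \in \Sigma_{n+1}$ corresponds to the $n$-simplex $(f_{\sigma(1)}, \ldots, f_{\sigma(n+1)})$, where $f_k \in S^1_n = FinSet(1, n+1)$ denotes the map $1 \mapsto k$. First I would write
\[
AW(\sigma) \;=\; \sum_{p_1 + \cdots + p_{n+1} = n} \;\bigotimes_{i=1}^{n+1} \phi_i^{(p_1, \ldots, p_{n+1})}(f_{\sigma(i)}),
\]
where $\phi_i^{(p)}$ applies $d_0$ a total of $p_1 + \cdots + p_{i-1}$ times and then the ``last'' face $d_{\mathrm{last}}$ a total of $p_{i+1} + \cdots + p_{n+1}$ times, reducing $f_{\sigma(i)}$ to degree $p_i$.

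Second, I would use that $\bC_*(S^1)$ is concentrated in degrees $0$ and $1$, spanned respectively by the basepoint $f_1 \in S^1_0$ and the non-degenerate $1$-cell $f_2 \in S^1_1$. Hence only partitions with each $p_i \in \{0,1\}$ can yield a nonzero contribution in the reduced tensor product, and since $\sum p_i = n$ with $n+1$ entries, exactly one $p_i$ must equal $0$ and the other $n$ must equal $1$. The hypothesis $\sigma(1) = 1$ then pins down $p_1 = 0$: the first factor $f_{\sigma(1)} = f_1 \in S^1_n$ is the totally degenerate simplex (iterated degeneracies of the basepoint), so every positive-degree face of it is again equal to $f_1$ in a lower level and vanishes in $\bC_*$. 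Consequently $p_j = 1$ for all $j \geq 2$, and only a single summand of $AW(\sigma)$ can possibly survive.

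The remaining step is a short direct calculation from the explicit face maps on $S^1_\bullet$. The front face $d_0$ comes from the adjacent multiplication $m^{\cdot}_{1,2}$ and iterates to $d_0^p(f_k) = f_{\max(k-p,\,1)}$, while the last face $d_n$ comes from the cyclic multiplication $m^{\cdot}_{n+1,1}$ and iterates to $d_{\mathrm{last}}^q(f_k) = f_k$ if $2 \leq k \leq n+1-q$ and $f_1$ otherwise. Combining both, the $i$-th tensor factor ($i \geq 2$) of the surviving summand equals $f_2$ precisely when $\sigma(i) = i$, and otherwise collapses to $f_1$ (which is zero in $\bC_1$). This forces $\sigma(i) = i$ for every $i \geq 2$ and hence $\sigma = \id$, yielding $AW(\id) = f_1 \otimes f_2^{\otimes n} = 1 \otimes y^{\otimes n}$ under the identifications from Section~\ref{sec:HHS1}. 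The main subtlety is book-keeping with the cyclic last face, whose behaviour differs from a standard simplicial $d_n$ and governs when the back-face reduction of $f_k$ collapses to the basepoint; once that is set up the vanishing arguments run mechanically.
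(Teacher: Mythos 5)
Your proposal is correct and follows essentially the same approach as the paper's proof: expand the reduced Alexander--Whitney map as a sum over partitions, use that $\bC_*(S^1)$ is concentrated in degrees $0$ and $1$ to reduce to a single partition $(0,1,\ldots,1)$, and then compute each tensor factor directly via the face maps to conclude it is nondegenerate iff $\sigma(i)=i$. The only cosmetic difference is that you package the iterated front/back faces into closed-form formulas $d_0^p(f_k)=f_{\max(k-p,1)}$ and $d_{\mathrm{last}}^q(f_k)=f_k$ for $2 \leq k \leq n+1-q$ (else $f_1$), whereas the paper unwinds the same iterations step by step using its equations \eqref{eq:diff1} and \eqref{eq:diff2}; both yield the same bookkeeping.
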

\begin{proof}
We first describe how a permutation $\sigma$ looks as an element of $((S^1)_{n}^{\times n+1})$ and explain what the boundary maps are.
The $n$-simplices of  $S^1_\bullet$ are given by $\{1, \ldots, n+1 \}$. For $i<n$ the boundary map $d_i$ maps both $i+1$ and $i+2$ to $i+1$ and is injective and monotone otherwise. The last boundary map $d_n$ maps both $n+1$ and $1$ to $1$. For a permutation $\sigma$ with $\sigma(1)=1$ we have $d_i(\sigma)(1)=1$ for any $i$. 
In general, for an element $j \in S^1_{n}$ with $j \neq n+1$ we have
\begin{equation}\label{eq:diff1} d_i(j)= \begin{cases} j \in S^1_{n-1} & \text{if } i+1 \geq j\\ j-1 \in  S^1_{n-1} &\text{if } i+1<j\end{cases}\end{equation}
and for $n+1 \in S^1_{n}$
\begin{equation}\label{eq:diff2} d_i(n+1)= \begin{cases} n \in S^1_{n-1} & \text{if } i+1\leq  n\\ 1 \in  S^1_{n-1} &\text{if } i=n.\end{cases}\end{equation}

Denote by $1 \in \bC_0(S^1_\bullet)$ the projection of the element $1 \in S^1_0$ and by $y  \in \bC_1(S^1_\bullet)$ the image of the element $2 \in S^1_1$. All other elements in $S^1_k$ are degenerate in $\bC_*(S^1_\bullet)$, i.e. zero after passing to the reduced complex.

We consider the reduced Alexander-Whitney map
\[AW:\bC_{n}((S^1)^{\times n+1}) \to ((\bC(S^1))^{\otimes n+1})_{n}=\bigoplus_{\substack{k_1, \cdots k_{n+1}\\ \sum k_i=n}} \bC_{k_1}(S^1) \otimes \cdots \otimes \bC_{k_n+1}(S^1).\] 
Since $\bC_k(S^1)=0$ if $k \neq 0,1$ we have
\[\bigoplus_{\substack{k_1, \cdots k_{n+1}\\ \sum k_i=n}} \bC_{k_1}(S^1) \otimes \cdots \otimes \bC_{k_{n+1}}(S^1) \cong \bigoplus_{\substack{1 \leq i \leq n+1 \\ k_i=0, k_j=1 \text{ for } j \neq i}} \bC_{k_1}(S^1) \otimes \cdots \otimes \bC_{k_{n+1}}(S^1).\] 
By \cite[Section 8.5.4]{weib95} the Alexander Whitney can be described as
\[AW(x)=\sum_{\substack{k_1, \cdots k_{n+1}\\ \sum k_i=n}} \overline{D}_{k_1, \cdots, k_{n+1}}^1(x) \otimes \cdots \otimes \overline{D}_{k_1, \cdots, k_{n+1}}^{n+1}(x) \]
with $D_{k_1, \cdots, k_{n+1}}^j=\underbrace{d^j_0 \cdots d^j_0}_{k_1+ \cdots +k_{j-1}} d^j_{k_1+\cdots+k_j+1} \cdots d^j_{n} \circ pr_j$ and $\overline{D}_{k_1, \cdots, k_{n+1}}^j$ the map after projecting to the reduced complex. Here, $pr_j:(S^1_\bullet)^{\times n+1} \to S^1_\bullet$ is the projection onto the $j$--th factor.

We fix $\sigma \in \Sigma_{n+1}$ with $\sigma(1)=1$ and compute $AW(\sigma)$:
 
Assume $k_1 =1$. We show that the map to the summand $\bC_{k_1}(S^1) \otimes \cdots \otimes \bC_{k_{n+1}}(S^1)$ is zero. To do so, we show that $\overline{D}_{k_1, \cdots, k_{n+1}}^1(\sigma)$ is zero. We have
\[D_{k_1, \cdots, k_{n+1}}^1(\sigma)=d^1_{2} \cdots d^1_{n}(\sigma) pr_1 \in C_1(S^1).\]
Since $pr_1(\sigma)=1$, using the description of the boundary maps above we see that $D_{k_1, \cdots, k_{n+1}}^1(\sigma)=1 \in S^1_1$ which is degenerate in $\bC_1(S^1_\bullet)$, so after projecting to the reduced complex the element becomes zero.

Therefore, the only possible non-zero part of the map $AW(\sigma)$ to the reduced complex is the one corresponding to $k_1=0$ and $k_i=1$ for $1 < i  \leq n+1$. 
Hence we are left to show that
\[\overline{D}_{0, 1, \cdots, 1}^j(\id) = \begin{cases} 1 & \text{if } j=1\\ y & \text{if } j>1 \end{cases}\]  and that for $\sigma \neq \id$ there exists a $j$ with $1 < j \leq n+1$ such that $\overline{D}_{0, 1, \cdots, 1}^j(\sigma)=0$. 

For the first part, we take $\sigma=\id$, i.e. $\sigma(j)=j$. We want to show that for the element $j \in S^1_{n}$, $\underbrace{d^j_0 \cdots d^j_0}_{j-2} d^j_{j}\cdots d^j_{n}(j)$ is $2 \in S^1_2$, i.e. its image in $C_*(S^1_\bullet)$ is given by $y$. Iterating equation \eqref{eq:diff1} for $1<j<n+1$ we obtain
\[d^j_{j}\cdots d^j_{n}(j)=j \in S^1_{n-((n+1)-j)}=S^1_{j-1}\]
and hence after applying the second case of Equation \eqref{eq:diff1} $(j-2)$ times, we obtain
\[\underbrace{d^j_0 \cdots d^j_0}_{j-2}(j)= j-(j-2)= 2 \in S^1_{1},\]
so we have shown the claim for all $j \neq n+1$. For $j=n+1$, equation \eqref{eq:diff2} implies\[\underbrace{d^j_0 \cdots d^j_0}_{n-1}(n+1) =  2 \in S^1_{1}.\]
Therefore, $AW(\id)=1 \otimes y \otimes \cdots \otimes y$.

Now assume that $\sigma \neq \id$. Then there is a $j$ such that $\sigma(j)<j$. Again
\[d^j_{j}\cdots d^j_{n}(\sigma(j)) = \sigma(j) \in S^1_{j-1}\]
but in this case we reach the element $1$ by applying $d^j_0$ only $\sigma(j)-1$ times, i.e.
\[\underbrace{d^j_0 \cdots d^j_0}_{\sigma(j)-1}( \sigma(j))=1 \in S^1_{j-\sigma(j)}.\]
Applying $d^j_0$ more often keeps the result as $1$, i.e.
\[\underbrace{d^j_0 \cdots d^j_0}_{j-2} d^j_{j}\cdots d^j_{n}( \sigma(j))=1 \in S^1_{1}.\]
This element is degenerate, i.e. zero after projecting to $\bC_*(S^1)$ and hence $AW(\sigma)=0$.
\end{proof}

Similarly, one can show
\begin{Lemma}
Let $g$ be a bijection $\{1, \ldots, n\} \to \{2, \ldots, n+1\}$, viewed as an element in $\Com(n, n+1)$. Then 
 $AW(g)=0$ if $g \neq \widetilde{id_n}$, with $\widetilde{id_n}:\{1, \ldots, n\} \to \{2, \ldots, n+1\}$ the map defined by $\widetilde{id_n}(j)=j+1$. Moreover, $AW(\widetilde{\id_{n}})=\underbrace{y \otimes \cdots \otimes y}_{n}$.
\end{Lemma}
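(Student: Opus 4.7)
The approach parallels the proof of the previous lemma. I would first identify $g\in\Com(n,n+1) \cong \F[FinSet(n,n+1)]\cong (S^1_n)^{\times n}$ with the element $(g(1),\ldots,g(n)) \in \bC_n((S^1_\bullet)^{\times n})$ at simplicial level $n$, and then analyze the reduced Alexander--Whitney map
\[AW:\bC_n((S^1)^{\times n}) \to ((\bC(S^1))^{\otimes n})_n = \bigoplus_{\substack{k_1,\ldots,k_n\\ \sum k_i=n}} \bC_{k_1}(S^1)\otimes\cdots\otimes\bC_{k_n}(S^1).\]
Because $\bC_k(S^1)=0$ for $k\notin\{0,1\}$ and we have $n$ summands totalling $n$, the only possibly non-zero component of $AW(g)$ is the one indexed by $(1,\ldots,1)$. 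This is the main simplification compared with the previous lemma, where the condition $\sigma(1)=1$ forced $k_1=0$; here every tensor factor must be $y$.

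It then remains to compute $\overline{D}^j_{1,\ldots,1}(g)=\underbrace{d^j_0\cdots d^j_0}_{j-1}\,d^j_{j+1}\cdots d^j_n(g(j))$ for each $j=1,\ldots,n$. For $g=\widetilde{\id_n}$ we have $g(j)=j+1$: iterating the first branch of \eqref{eq:diff1} (and using \eqref{eq:diff2} in the corner case $j=n$, $g(n)=n+1$) shows that the face maps $d^j_{j+1},\ldots,d^j_n$ preserve the entry $j+1$, which ends up as the top element of $S^1_j$. The subsequent $j-1$ applications of $d_0$ then decrement the entry by one at each step, arriving at $2\in S^1_1$, which represents $y$ in the reduced complex. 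Hence $AW(\widetilde{\id_n})=y^{\otimes n}$.

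For $g\neq\widetilde{\id_n}$ I would first observe by a descending inductive argument that there must exist some $j$ with $g(j)\leq j$; otherwise $g(n)=n+1$, $g(n-1)=n$, and so on, forcing $g=\widetilde{\id_n}$. Fix such a $j$, so $g(j)\in\{2,\ldots,j\}$. Since $g(j)<j+1$ and at each intermediate simplicial level the top element strictly exceeds $g(j)$, the face maps $d^j_{j+1},\ldots,d^j_n$ leave $g(j)$ unchanged by the first branch of \eqref{eq:diff1}, placing $g(j)\in S^1_j$ below the top. The $d_0$'s then decrement the entry by one (second branch of \eqref{eq:diff1}, applicable because $g(j)\geq 2$) until it reaches $1$, after which further $d_0$'s fix $1$. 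The final value is $1\in S^1_1$, which is degenerate and hence zero in $\bC_1(S^1)$. Consequently the $j$-th tensor factor of $AW(g)$ vanishes, so $AW(g)=0$.

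The only real bookkeeping nuisance is keeping precise track of which branch of \eqref{eq:diff1}/\eqref{eq:diff2} applies at each step and of the current simplicial level; but since the calculation is a direct adaptation of the previous lemma (with one fewer factor and without the special role of the first input) this should not pose any genuine obstacle.
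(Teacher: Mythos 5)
Your proof is correct and takes essentially the same approach as the paper, which merely states that the argument is analogous to the previous lemma's. You have accurately filled in the details: with $n$ factors summing to $n$ all $k_i$ must equal $1$, so only the $(1,\ldots,1)$ component of $AW$ is relevant; and the face-map computation for $\widetilde{\id_n}$ together with the observation that $g\neq\widetilde{\id_n}$ forces some $j$ with $g(j)\leq j$ (so that the $j$-th factor lands on the degenerate element $1\in S^1_1$) is exactly what the paper means by ``completely analogous to the previous proof.''
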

\begin{proof}
Similar to above, one shows that
\[\overline{D}_{1, 1, \cdots, 1}^j(\widetilde{\id})=y\]
and
that for $g \neq \widetilde{\id}$ there exists a $j$ with $1 \leq j \leq n$ such that $\overline{D}_{1, 1, \cdots, 1}^j(g)=0$. 
The arguments are completely analog to the ones in the previous proof.
\end{proof}

\begin{Prop}\label{prop:aktilde}
Applying the Alexander Whitney map to the families $sh^k$ and $B^k$ we obtain
\[AW((sh^k)_n)= \begin{cases} \binom{n-1}{n-k} a^n &\text{if } n \geq k>0
\\a^0 &\text{if } n=k=0
\\ 0 & \text{else} \end{cases}\]
and
\[AW((B^k)_n)= \begin{cases}\binom{n-1}{n-k}b^n &\text{if } n \geq k>0
\\b^0 &\text{if } n=k=0\\ 0 & \text{else.} \end{cases}\]
\end{Prop}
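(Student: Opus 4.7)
The plan is to apply the two preceding lemmas to reduce each identity to a combinatorial counting problem. By the first lemma, the Alexander--Whitney map annihilates every $\sigma \in \Sigma_{n+1}$ with $\sigma(1)=1$ except $\id_{n+1}$, which it sends to $1 \otimes y^{\otimes n} = a^n$. By the second lemma, it annihilates every bijection $\{1, \ldots, n+1\} \to \{2, \ldots, n+2\}$ except $\widetilde{id_{n+1}}$, which it sends to $y^{\otimes(n+1)} = b^n$. So by linearity it suffices to determine the coefficient of $\id_{n+1}$ in $(sh^k)_n$ and the coefficient of $\widetilde{id_{n+1}}$ in $(B^k)_n$.

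For $(sh^k)_n$ I would use the combinatorial shuffle description recalled in Section \ref{sec:loda}: for $n \geq 1$ and $k \geq 1$, $(sh^k)_n$ is the signed sum over all $(p_1, \ldots, p_k)$-shuffles in $\Sigma_n$ (with $p_j \geq 1$ and $\sum p_j = n$), embedded into $\Sigma_{n+1}$ by fixing $1$. The identity permutation is a $(p_1, \ldots, p_k)$-shuffle for every such composition (it preserves the increasing order on each block) and appears with sign $+1$. There are exactly $\binom{n-1}{k-1} = \binom{n-1}{n-k}$ compositions of $n$ into $k$ positive parts, so this is the coefficient of $\id$ in $(sh^k)_n$. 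The three edge cases ($n=k=0$; $k=0, n>0$; $n<k$) follow directly from the definitions of $sh^0$ and from the fact that no such composition exists when $n < k$.

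For $(B^k)_n$ I would use the explicit description $(B^k)_n = \sum_{l=1}^k \binom{n-l}{k-l} R_n^l$ with $R_n^l = \sum_{g \in \Sigma^+_{n+1, l}} sgn(g)\, g$. The element $\widetilde{id_{n+1}}$ arises as the image of $\id \in \Sigma^1_{n+1, 1}$ under the trivial cyclic embedding (``shift by $0$''); since the $n+1$ cyclic embeddings collectively realise every bijection $\{1, \ldots, n+1\} \to \{2, \ldots, n+2\}$ exactly once, this accounts for every occurrence of $\widetilde{id_{n+1}}$ in $\bigsqcup_l \Sigma^+_{n+1, l}$. Thus $\widetilde{id_{n+1}}$ contributes only to $R_n^1$ with coefficient $sgn(\id) = +1$, and the coefficient of $\widetilde{id_{n+1}}$ in $(B^k)_n$ is $\binom{n-l}{k-l}\big|_{l=1} = \binom{n-1}{k-1}$ as desired. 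The case $n = k = 0$ is handled separately from the definition $(B^0)_0 = R_0^1$, and the vanishing for $n < k$ or $(k = 0, n > 0)$ follows as above.

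The main technical obstacle I anticipate is the verification that for each $l$ the $n+1$ cyclic embeddings of $\Sigma^1_{n+1, l}$ into $\Com(n+1, n+2)$ have disjoint images, so that the counting in the $B^k$ case genuinely reduces to identifying the unique pair $(\id, \text{shift by } 0)$ producing $\widetilde{id_{n+1}}$. This is a direct index check from the construction ``add $l$ modulo $n+1$ and relabel $\{1, \ldots, n+1\}$ as $\{2, \ldots, n+2\}$'', together with the fact that $\Sigma^1$-permutations fix $1$, but it requires one to navigate carefully between the several index conventions used in Section \ref{sec:loda}. Alternatively, one could work from $B^k = B \circ sh^k$, writing $(B^k)_n = (sh^k)_n \cdot B_{n+1}$ and checking directly that the pair $(\id, g_1)$ with $g_1 = \widetilde{id_{n+1}}$ is the unique one producing $\widetilde{id_{n+1}}$, at the cost of having to track the sign $(-1)^{i(l+1)}$ in the definition of $B_l$.
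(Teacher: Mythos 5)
Your proposal is correct and follows essentially the same two-step route as the paper's proof: apply the two preceding $AW$-lemmas to kill everything but $\id_{n+1}$ (resp.\ $\widetilde{\id_{n+1}}$), then read off the coefficient of that surviving term. The only slight variation is cosmetic: for $sh^k$ you extract the coefficient $\binom{n-1}{k-1}$ by counting compositions of $n$ into $k$ positive parts via the $(p_1,\ldots,p_k)$-shuffle description, while the paper reads it directly off the defining linear combination $sh^k_n=\sum_i \binom{n-i}{k-i}l^i_n$ together with the fact that $\id\in\Sigma^1_{n+1,1}$; both land in the same place since $\binom{n-1}{k-1}=\binom{n-1}{n-k}$. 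Your uniqueness check for $(\id,\text{shift by }0)$ realizing $\widetilde{\id_{n+1}}$ is exactly the implicit step the paper also uses to identify the coefficient of $R^1_n$, so anticipating it as the one real index-chase is well placed.
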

\begin{proof}
Using the previous two lemmas and the fact, that the identity permutation lies in $\Sigma_{n,1}$ we compute
\[AW((sh^{k})_n)=\sum_{i=1}^{n} \binom{n-i}{n-k} AW(l_n^i)=\binom{n-1}{n-k} AW(\id_{n+1})=\binom{n-1}{n-k} a^n\]
and
\[AW((B^{k})_n)=\sum_{i=1}^{n} \binom{n-i}{n-k} AW(R_n^i)=\binom{n-1}{n-k} AW(\widetilde{\id_{n+1}})=\binom{n-1}{n-k}b^n.\]
The case $k=0$ follows similarly.
\end{proof}

\subsection{Result}\label{sec:res}
We are ready to state our main theorem of this section:

\begin{Theorem}\label{Th:akbk}
The homology $H_*(\Nat(\oc{1}{0}, \oc{1}{0}))$ is concentrated in degrees $0$ and $1$.
In these degrees an explicit description of the elements is given by the following:
\begin{enumerate}
\item
Every element in $H_0(\Nat(\oc{1}{0}, \oc{1}{0}))$ can be uniquely written as
$\sum_{k=0}^\infty c_k [sh^k]$ with $c_k \in \F$ and $[sh^k]$ the classes of the cycles $sh^k$ in homology. Hence, in the $i$-th degree of the product this is given by
$(\sum_{k=0}^\infty c_k [sh^k])_i=\sum_{k=0}^i c_k [(sh^k)_i]$, i.e. it is a finite sum in each component.
\item
Every element in $H_1(\Nat(\oc{1}{0}, \oc{1}{0}))$ can be uniquely written as
$\sum_{k=0}^\infty c_k [B^k]$ with $c_k \in \F$ and $[B^k]$ the classes of the cycles $B^k$ in homology. In the $i$-th degree of the product this is given by
$(\sum_{k=0}^\infty c_k [B^k])_i=\sum_{k=0}^i c_k [(B^k)_i]$.
\end{enumerate}
\end{Theorem}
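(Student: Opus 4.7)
The plan is to pull the problem back along the quasi-isomorphism $Q:\Nat(\oc{1}{0},\oc{1}{0}) \to (\bC_*(H^*(S^1),H^*(S^1)))^*$ constructed in Section \ref{sec:conHH}. The target complex was computed in Section \ref{sec:HHS1} to have trivial differential and be concentrated in degrees $0$ and $1$, with $H_0 = \prod_k \F\langle a^k\rangle$ and $H_1 = \prod_k \F\langle b^k\rangle$. Since $Q$ is a quasi-isomorphism, this immediately forces $H_*(\Nat(\oc{1}{0},\oc{1}{0}))$ to be concentrated in degrees $0$ and $1$ and abstractly isomorphic to $\prod_k \F$ in each.

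To upgrade this abstract isomorphism into the explicit description claimed, I first observe that the formal infinite sums $\sum_{k=0}^\infty c_k\, sh^k$ and $\sum_{k=0}^\infty c_k\, B^k$ are well-defined elements of $\Nat(\oc{1}{0},\oc{1}{0})$: since $(sh^k)_n = 0 = (B^k)_n$ whenever $n < k$ (Section \ref{sec:loda}), the sum in any fixed Hochschild degree $n$ truncates to $k \leq n$ and is a finite sum of cycles, hence itself a cycle. Next, Proposition \ref{prop:aktilde} gives $Q(sh^k)_n = \binom{n-1}{n-k}\,a^n$ and $Q(B^k)_n = \binom{n-1}{n-k}\,b^n$ (with the boundary case $n=k=0$ as stated there). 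The map $\F^\N \to \prod_n \F\langle a^n\rangle$ sending $(c_k)$ to $Q_*\bigl[\sum_k c_k\, sh^k\bigr]$ is therefore described by the matrix $M_{n,k} = \binom{n-1}{n-k}$, which is lower triangular with $1$s on the diagonal. It is consequently a bijection: given any target sequence $(c'_n)$, the equations $c'_n = \sum_{k\leq n} c_k \binom{n-1}{n-k}$ are solved uniquely by the recursion $c_n = c'_n - \sum_{k<n} c_k \binom{n-1}{n-k}$, starting from $c_0 = c'_0$.

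Since $Q_*$ is itself an isomorphism, this identifies the assignment $(c_k) \mapsto \sum_k c_k [sh^k]$ as a bijection $\F^\N \to H_0(\Nat(\oc{1}{0},\oc{1}{0}))$, proving statement (1), and the completely parallel argument with the second half of Proposition \ref{prop:aktilde} gives statement (2). The main point requiring care is the interplay between the product-over-$k$ and the direct-sum-in-each-Hochschild-degree structures, since the statement asserts a bijection of product spaces rather than of the more naive spans of finite sums; however, the lower triangular form of the binomial matrix with unit diagonal together with the degree-wise vanishing $(sh^k)_n=(B^k)_n=0$ for $n<k$ guarantees that the inverse assignment $(c'_n) \mapsto (c_n)$ stays in $\F^\N$, and there is no further serious obstacle.
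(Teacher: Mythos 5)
Your proposal is correct and follows essentially the same route as the paper's proof: use the quasi-isomorphism $Q$ from Section \ref{sec:conHH} to identify homology with $(\bC_*(H^*(S^1),H^*(S^1)))^*$, compute $Q(sh^k)$ and $Q(B^k)$ via Proposition \ref{prop:aktilde}, and exploit the lower-unitriangular binomial matrix to prove bijectivity by recursion. The only stylistic difference is that you package the injectivity and surjectivity arguments into a single observation about the invertibility of a lower triangular matrix with unit diagonal, whereas the paper spells them out as separate inductive arguments; the mathematical content is identical.
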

\begin{proof}
A general element in $H_0((\bC_*(H^*(S^1), H^*(S^1)))^*) \cong (\bC_0(H^*(S^1), H^*(S^1)))^*$ is given by an element in the product of the form $(c_i a_i)_i$ with $c_i \in \F$. Moreover, we know that $Q$ is an isomorphism on homology, i.e. we need to show that $Q$ is an isomorphism between the set of elements given in the theorem and the set of $(c_i a_i)$.

Take $x=\sum_{k=0}^\infty c_k sh^k$, i.e. $x_i=\sum_{k=0}^i c_k (sh^k)_i$ and $[x]_i=\sum_{k=0}^i c_k [(sh^k)_i]$. 
We see that $Q([x])=[Q(x)]=Q(x)$ under the identification of $(\bC_*(H^*(S^1), H^*(S^1)))^*$ with its homology.

Thus, for $n \geq 1$ we get
\begin{align*}
(Q(x))_n=AW(x_n)
=AW(\sum_{k=0}^n c_k (sh^k)_n)
=\sum_{k=0}^n c_k AW((sh^k)_n)=\sum_{k=1}^n c_k  \binom{n-1}{n-k} a^n
\end{align*}
and similarly for $n=0$ we have $(Q(x))_0=c_0 a^0$.

To see that this is injective, assume $(Q(x))_n=0$ for all $n$, i.e.
$\sum_{k=0}^n c_k  \binom{n-1}{n-k} a^n=0$ for all $n \geq 1$ and $c_0 a^0=0$. Since the $a^n$ span the product, we get $c_0=0$ and $\sum_{k=0}^n c_k  \binom{n-1}{n-k}=\sum_{k=0}^{n-1} c_k  \binom{n-1}{n-k}+ c_n=0$ for all $n \geq 1$ which inductively implies $c_n=0$ for all $n$.

For surjectivity, we want to find an $x$ such that $Q(x)=(f_i a_i)$, i.e. $Q(x)_n=(f_n a_n)$ for arbitrary $f_n \in \F$. 
If we put $c_0=f_0$, $c_1=f_1$ and $c_n=f_n -\sum_{k=1}^{n-1} c_k  \binom{n-1}{n-k}$ inductively, for $n \geq 1$ we get
\[Q(x)_n=\sum_{k=1}^n c_k  \binom{n-1}{n-k} a^n=\left(c_n+\sum_{k=1}^{n-1} c_k  \binom{n-1}{n-k}\right) a^n=f_n a^n.\]
This proves the theorem for the degree zero part.

The same computations work if we replace $sh^k$ by $B^k$ and $a^k$ by $b^k$, so the theorem is proven.
\end{proof}

\begin{Remark} \begin{enumerate} \item We have $\id=sh^0+sh^1$ and the BV-operator $B=B^0+ B^1$. If the reader prefers to have these two as part of the generating family, we can replace $sh^0$ by $\id$ and $B^0$ by $B$.
\item By equation \eqref{eq:shlambda} the lambda operations lie in the span of the $sh^k$. Even though each $sh^k$ also lies in the finite span of the $\lambda^i$ for $i \leq k$, we cannot replace all $sh^k$ by $\lambda^k$ since then the infinite sums taken above would not anymore be degree-wise finite. 
 \end{enumerate}

\end{Remark}

\section{Iterated Hochschild homology}
In this section we generalize our previous computations and describe the elements in the homology of the complex $\Nat_\Com(\oc{n_1}{m_1}, \oc{n_2}{m_2})$. We start with stating the theorem, give an example of an operation and then give an outline of the proof.

\subsection{Definition of extra generators and the main theorem}
To state the main theorem we need to define a few more elementary operations which are the building blocks for general operations:
\begin{Def}
\begin{enumerate}
\item Let $p \in \Nat_\Com(\oc{1}{0}, \oc{0}{1})$ be the map
$\oplus_{k \geq 0} \Phi(k+1) \to \Phi(1)$ given by the projection onto the first summand.
\item Define $sh^0$ and $B^0$ in $\Nat_{\Com}(\oc{0}{1}, \oc{1}{0})$ the restriction of the corresponding elements in  $\Nat_{\Com}(\oc{1}{0}, \oc{1}{0})$ to the summand $\Phi(1)$. Hence, $sh^0:\Phi(1) \to C_*(\Phi)$ is the inclusion of $\Phi$ into the Hochschild complex and $B^0$ is this inclusion composed with Connes boundary operator.
\item The shuffle product $m^{1,2} \in \Nat_\Com(\oc{2}{0}, \oc{1}{0})$ is defined as
\[(m^{1,2})_{j_1, j_2}=\sum_{\substack{\sigma \in \Sigma_{j_1+j_2} \\ (j_1,j_2)\text{--shuffle}}} sgn(\sigma) F(\sigma) \in \Com(j_1+1+j_2+1, j_1+j_2+1)\]
 where the sum runs over all $(j_1,j_2)$--shuffles $\sigma \in \Sigma_{j_1+j_2}$ and the map $F(\sigma)$ sends $1$ and $j_1+2$ to $1$, $i$ to $\sigma(i)+1$ if $1<i \leq j_1$ and to $\sigma(i)+2$ if $j_1+2<i \leq j_2+2$. An illustration of $(m^{1,2})_{2,1}$ is given in Figure \ref{fig:outputs}. The shuffle product is associative and commutative.

Define $m^{1, \cdots, r} \in \Nat_\Com(\oc{r}{0}, \oc{1}{0})$ to be the iterated shuffle product.
If we write $m^{M}$ for some subset $M$ of $\{1, \ldots, r\}$, we mean the element only applying the shuffle product to this subset.
\item Define $\overline{m}^{1, \cdots, r} \in \Nat_\Com(\oc{0}{r}, \oc{0}{1})$ the morphism multiplying all elements together. Again, if we label by a subset, we mean the operation only multiplying the elements of the subset.
\end{enumerate}
\end{Def}
Now one can check:
\begin{Lemma}
All the operations defined above are cycles.
\end{Lemma}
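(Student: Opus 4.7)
The plan is to verify, for each of the four operations, that the $\Nat_\Com$-differential annihilates it, working directly from the explicit formula for the Hochschild differential recalled in Section \ref{sec:hoch}. In every case but one the check is elementary.

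For the projection $p$: since $p$ is nonzero only on the $\Phi(1)$-summand of $C(\Phi)$ and lands in $\Phi(1)$, commutation with the internal differential $d_\Phi$ is immediate from naturality. The only other potential contribution to $d(p)$ would come from the Hochschild boundary $\Phi(2) \to \Phi(1) \subset C(\Phi)$, which equals $-\Phi(m^2_{1,2}) + \Phi(m^2_{2,3})$. Using the convention $m^2_{2,3} = m^2_{2,1}$ and the fact that there is a unique surjection $\{1,2\} \to \{1\}$ in $\Com$, the two terms cancel. So $p$ is a cycle.

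For $sh^0$ and $B^0$ in $\Nat_\Com(\oc{0}{1}, \oc{1}{0})$: $sh^0$ is the inclusion $\Phi(1) \hookrightarrow C(\Phi)$ onto the lowest summand, which corresponds to the $0$-simplices of the Hochschild simplicial object. No simplicial face maps emanate from it, so the only differential acting on $sh^0(\Phi(1))$ is $d_\Phi$, and naturality makes $sh^0$ a chain map. Then $B^0 = B \circ sh^0$ is the composition of two cycles, with $B$ already shown to be a cycle in Section \ref{sec:loda}, and hence is a cycle itself. Similarly, $\overline{m}^{1, \ldots, r}$ is just $\Phi$ applied to the unique morphism $r \to 1$ in $\Com$; since $\Com$ is concentrated in degree zero and $\Phi$ is a dg-functor, the result is automatically a chain map.

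The main obstacle is the shuffle product $m^{1,2}$, from which the iterates $m^{1, \ldots, r}$ follow by associativity. Here one has to establish the Leibniz identity
\[d \circ m^{1,2} = m^{1,2} \circ d\]
in $\Nat_\Com(\oc{2}{0}, \oc{1}{0})$. This is the classical statement that the shuffle product makes the Hochschild complex of a commutative algebra into a commutative dg-algebra, originally due to Eilenberg--Mac Lane and presented in the Hochschild setting in Loday's \emph{Cyclic Homology}, \S 4.2. The argument is a direct combinatorial match: each $(j_1,j_2)$-shuffle $\sigma$ followed by a neighboring-pair multiplication in the target either factors as a boundary of the left or right source applied to a shuffle of smaller shape, or pairs up with a second term that cancels via commutativity (this is where the wrap-around boundary forces commutativity of $\Phi(1)$ into the argument). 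Translating from the algebra-level identity to the present PROP-level formulation is routine, since the shuffle product is defined as a formal sum of maps in $\Com$ indexed by shuffles and all the relevant relations hold already in $\Com$.
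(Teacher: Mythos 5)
The paper itself offers no proof of this lemma (it is preceded only by ``Now one can check:''), so there is nothing to compare against directly; your verification supplies the missing details and looks correct. The checks for $p$, $sh^0$, $B^0$, and $\overline{m}^{1,\ldots,r}$ are all sound: the cancellation $m^2_{1,2}=m^2_{2,1}$ for $p$, the absence of an outgoing Hochschild boundary from the $\Phi(1)$-summand for $sh^0$, the ``composition of cycles is a cycle'' reduction for $B^0$, and the observation that $\Nat_\Com(\oc{0}{r},\oc{0}{1})\cong\Com(r,1)$ has trivial differential for $\overline{m}^{1,\ldots,r}$. For $m^{1,2}$ the invocation of the classical Leibniz rule for the Hochschild shuffle product of a commutative algebra (Eilenberg--Mac Lane, Loday) is the right move, and reducing to the universal case $\Phi=\Com(k,-)$ is exactly how formal operations are handled in this framework since $\Nat_\Com(\oc{n_1}{m_1},\oc{n_2}{m_2})\cong D^{n_1}C^{n_2}(\Com(-,-))(m_2)(m_1)$. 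One could quibble that the combinatorial sketch for the shuffle product is only a sketch, but that is appropriate given the paper also treats the lemma as a routine check.
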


Using all these operations, we can define subcomplexes $A_{k_1, \cdots, k_{n_1}}$ of $\Nat_\Com(\oc{n_1}{m_1}, \oc{n_2}{m_2})$ which we then take products of to get all formal operations. Elements of $A_{k_1, \cdots, k_{n_1}}$ are the composition of first applying the operations $sh^k$ and $B^k$ from before to each factor (and precomposing with the inclusion from the algebra into the Hochschild complex if needed), projecting some of the resulting terms onto the algebra and then composing with a tensor product of shuffle products and ordinary products in the algebra. We write this formally as follows:

\begin{Def}\label{def:elements}
For $k_i \geq 0$ let $A_{k_1, \ldots, k_{n_1}}= \bigoplus_{f,s} \langle x_{f,s} \rangle \subset \Nat(\oc{n_1}{m_1}, \oc{n_2}{m_2})$ where 
$f$ and $s$ are functions 
\begin{itemize}
\item with $f: \{1, \cdots, n_1+m_1\} \to \{1, \cdots, n_2+m_2\}$ such that $f(i) \leq n_2$ if $k_i>0$
\item $s:f^{-1}(\{1, \cdots, n_2\}) \to \{0,1\}$
\end{itemize}
and $x_{f,s}$ is the composition $x_{f,s}=x_2 \circ x_1$ where for $c:=|f^{-1}(\{1, \cdots, n_2\})|$ we define the elements $x_1 \in \Nat_\Com(\oc{n_1}{m_1}, \oc{c}{m_1+n_1-c})$ and $x_2 \in \Nat_\Com(\oc{c}{m_1+n_1-c}, \oc{n_2}{m_2})$ as follows:
\begin{itemize}
\item The element $x_1=z_1 \otimes \ldots \otimes z_{n_1+m_1}$ is the tensor product of operations $z_i$.
The operations $z_i$ are defined as follows:
\begin{itemize}
\item If $1 \leq i \leq n_1$  and
\begin{itemize}
\item if $f(i) \leq n_2$ then $z_i \in \Nat_\Com(\oc{1}{0}, \oc{1}{0})$ is given by
\[ z_i= \begin{cases} sh^{k_i} &\text{if } s(i)=0\\ B^{k_i} & \text{if } s(i)=1,\end{cases}\]
\item if $f(i)>n_2$ and thus $k_i=0$ then $z_i=p \in \Nat_\Com(\oc{1}{0}, \oc{0}{1})$.
\end{itemize}
\item If $n_1+1 \leq i \leq n_1+m_1$ and
\begin{itemize}
\item if $f(i) \leq n_2$ then $z_i \in \Nat_\Com(\oc{0}{1}, \oc{1}{0})$ given by
\[ z_i = \begin{cases} sh^0 &\text{if } s(i)=0\\ B^0 & \text{if } s(i)=1,\end{cases}\]
\item if $f(i) > n_2$ then $z_i=\id \in \Nat_\Com(\oc{0}{1}, \oc{0}{1}).$
\end{itemize}
\end{itemize}
\item The element $x_2 \in \Nat(\oc{c}{m_1+n_1-c}, \oc{n_2}{m_2})$ takes the shuffle product of all elements with same value $j$ under $f$ (and this is the output $j$). More precisely, 
\[x_2=m^{\{f^{-1}(1)\}} \otimes \ldots \otimes m^{\{f^{-1}(n_2)\}} \otimes \overline{m}^{\{f^{-1}(n_2+1)\}} \otimes \ldots \otimes \overline{m}^{\{f^{-1}(n_2+m_2)\}}. \]
\end{itemize}
\end{Def}
 Since both $x_1$ and $x_2$ got constructed out of cycles the element $x$ is a cycle again. Hence, all complexes $A_{k_1, \cdots, k_{n_1}}$ have trivial differential.

Now we are able to state the main theorem of the paper:
\begin{Theorem}\label{th:mainmain}
The complex $\Nat_\Com(\oc{n_1}{m_1}, \oc{n_2}{m_2})$ is quasi-isomorphic to the product
\[\prod_{k_1, \cdots, k_{n_1}} A_{k_1, \ldots, k_{n_1}}\]
and hence a general element in $H_*(\Nat_\Com(\oc{n_1}{m_1}, \oc{n_2}{m_2}))$ is an infinite sum of scalar multiples of the elements described in Definition \ref{def:elements}, which are tensor products of the basic operations $sh^k$ and $B^k$  composed with tensor products of shuffle and ordinary products.
\end{Theorem}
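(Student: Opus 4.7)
The overall plan is to lift the strategy of Sections \ref{sec:conHH}--\ref{sec:res} to the iterated setting. Starting from the identification $\Nat_\Com(\oc{n_1}{m_1},\oc{n_2}{m_2}) \cong \bD^{n_1}\bC^{n_2}(\Com(-,-))(m_2)(m_1)$, I would first generalize the quasi-isomorphism $Q$ from Section \ref{sec:conHH}: using $\Com(k,l) \cong \Com(1,l)^{\otimes k}$ and applying the Alexander--Whitney reduction slot by slot, the inner iterated Hochschild complex $\bC^{n_2}(\Com(k,-))(m_2)$ should become quasi-isomorphic to a tensor product of copies of $H_*(S^1_\bullet)$ arranged by the $n_2$-fold simplicial structure. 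Dualizing via Proposition \ref{Prop:dual} turns the outer $\bD^{n_1}$ into a reduced iterated Hochschild complex on the dual side, producing a quasi-isomorphism $Q$ from $\Nat_\Com(\oc{n_1}{m_1},\oc{n_2}{m_2})$ to the linear dual of an iterated reduced Hochschild complex built out of $H^*(S^1)$-data.

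Since $H_*(S^1)$ is concentrated in degrees $0$ and $1$ with trivial differential, the resulting target complex again has trivial differential, and dualizing the direct sum over word lengths yields an infinite product naturally indexed by tuples $(k_1, \ldots, k_{n_1})$ recording the sizes of the $n_1$ inner Hochschild words. A basis of each factor of this product should be in bijection with the combinatorial data $(f,s)$ of Definition \ref{def:elements}: the index $k_i$ is the Hochschild length in the $i$-th $\bD$-slot; the map $f$ records where each input is routed (which Hochschild or algebra output); and $s$ selects between the $1 \otimes y^{\otimes k_i}$ and $y \otimes y^{\otimes k_i}$ components of the corresponding $H_*(S^1)$-factor, matching the $sh^{k_i}$ versus $B^{k_i}$ choice. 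The next step is then to generalize Proposition \ref{prop:aktilde} by computing $Q(x_{f,s})$ directly: the $sh^{k_i}$ and $B^{k_i}$ factors in $x_1$ should supply the classes $a^{k_i}$ and $b^{k_i}$ in the appropriate $H_*(S^1)$-factor, the projection $p$ handles inputs sent to an algebra output, and the shuffle/ordinary products assembled into $x_2$ combine inputs landing in the same output via the Eilenberg--Zilber shuffle map on the tensor side.

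The proof would then conclude as in Theorem \ref{Th:akbk}: the vanishing $(sh^k)_l=(B^k)_l=0$ for $l<k$ makes the pairing between the generators $x_{f,s}$ and the candidate basis of the target strictly upper-triangular in the $k_i$-variables, hence invertible. Injectivity of the induced map $\prod_{k_1,\ldots,k_{n_1}} A_{k_1,\ldots,k_{n_1}} \to H_*(\Nat_\Com(\oc{n_1}{m_1},\oc{n_2}{m_2}))$ would follow from linear independence of the images, and surjectivity by inductively solving for the coefficients exactly as in the one-variable case. The main obstacle is the iterated Alexander--Whitney calculation underlying Step 2: in the single-variable case Proposition \ref{prop:aktilde} collapsed to a clean binomial coefficient because only the identity permutation survived, but here the shuffle products $m^S$ are themselves signed sums of $(p_1,\ldots,p_{|S|})$-shuffles which must be pushed through the iterated reduced AW map, and identifying precisely which composite permutations survive projection to the reduced complex, together with the correct signs coming from the iterated simplicial structure, is the combinatorial core of the argument. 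Care must also be taken in Step 1 to ensure the Alexander--Whitney reductions compose coherently across the iteration so that the quasi-isomorphism $Q$ is genuinely defined at the chain level.
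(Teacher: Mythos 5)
Your proposal shares the paper's starting point (reduce via Alexander--Whitney to $H_*(S^1)$-data and dualize) but proposes a genuinely different implementation that, as written, has a real gap. The paper never tries to identify the target globally as the dual of a single iterated Hochschild complex and then compute $Q(x_{f,s})$ directly. Instead it works with the explicit multi-(co)simplicial model of $D^{n_1}C^{n_2}(\Com(-,-))(m_2)(m_1)$, factors each generator of $\Com$ through a routing map $f:\{1,\ldots,d_{\underline{h}}\}\to\{1,\ldots,n_2+m_2\}$ (note: $f$ a priori lives on the \emph{large} index set depending on all $h_i$, not on $\{1,\ldots,n_1+m_1\}$), and then proves as a separate lemma that the summand $\widetilde{D}'$ where $f$ fails to be constant on the $h_i$-intervals is acyclic, via an explicit retraction using codegeneracies. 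This acyclicity is the mechanism by which $f$ descends to $\{1,\ldots,n_1+m_1\}$ and by which the constraint ``$f(i)\le n_2$ when $k_i>0$'' in Definition \ref{def:elements} appears. Your proposal asserts that the target basis is ``in bijection with the $(f,s)$ data'' but gives no mechanism for this descent, and in particular does not explain why routings that vary as one copies an input under the coboundary maps contribute nothing.

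The second difference is strategic and you half-see it yourself. You propose to ``compute $Q(x_{f,s})$ directly,'' acknowledging that pushing the signed shuffle sums $m^S$ through the iterated reduced AW map and identifying surviving permutations ``is the combinatorial core.'' The paper deliberately avoids this computation: it never applies AW to $x_{f,s}$ as a whole. Instead it introduces an intermediate complex $\widehat{D}$ where each input's Hochschild word-length $l_i$ is a separate variable, and shows that the Eilenberg--Zilber map $\widehat{D}\to\widetilde{D}$ is a quasi-isomorphism \emph{which is itself} post-composition with $x_2$. That isolates the shuffle-product bookkeeping inside the standard AW/EZ inverse-equivalence machinery, so that what remains in $\widehat{D}$ is a product of honest single-variable pieces, each handled exactly as in Theorem \ref{Th:akbk} (Proposition in Section \ref{sec:oper}). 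If you insist on a direct computation of $Q(x_{f,s})$, you would need to prove that in each $\diag$-block only the identity shuffle survives the reduced AW projection and track the iterated simplicial signs, which is substantially harder than the one-variable Proposition \ref{prop:aktilde} and is not sketched in your plan. Finally, a smaller but genuine issue: the reduction $\bC^{n_2}(\Com(k,-))(m_2)\simeq (\bC^{n_2}(\Com(1,-))(m_2))^{\otimes k}$ that your Step 1 needs must be a quasi-isomorphism \emph{of functors in $k$} for the outer $\bD^{n_1}$ and \cite[Cor.~1.5]{wahl12} to apply; the coherence of the iterated AW/EZ choices across the $n_2$ simplicial directions is exactly the content that the paper's use of Propositions \ref{prop:double} and \ref{cor:chhtpy} is designed to certify, and it is not addressed in your outline.
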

\begin{Remark}
In \cite{kla13b} we define a complex of looped diagrams and a subcomplex of tree-like looped diagrams $\iplD(\oc{n_1}{m_1}, \oc{n_2}{m_2})$ together with a dg-map $J_\Com:\iplD(\oc{n_1}{m_1}, \oc{n_2}{m_2}) \to \Nat_\Com(\oc{n_1}{m_1}, \oc{n_2}{m_2})$. There is a subcomplex $\tiplD(\oc{n_1}{m_1}, \oc{n_2}{m_2}) \subset \iplD(\oc{n_1}{m_1}, \oc{n_2}{m_2})$ such that the image of this complex is given by $\prod_{k_1, \cdots, k_{n_1}} A_{k_1, \ldots, k_{n_1}}$. Denoting the restriction of $J_\Com$ to $\tiplD(\oc{n_1}{m_1}, \oc{n_2}{m_2})$ by $\widetilde{J}_\Com$, the above theorem can be restated as saying that $\widetilde{J}_\Com$ is a quasi-isomorphism (see \cite[Section 3]{kla13b}).
\end{Remark}

Before we deal with the proof of the theorem, we want to give an example of an operation:
\begin{Example}
We give an example of an element in $\Nat(\oc{2}{2},\oc{2}{1})$ belonging to the factor $A_{0,2}$ as defined in Definition \ref{def:elements}. So we fixed $n_1=2$, $m_1=2$, $n_2=2$ and $m_2=1$. Moreover, we choose $k_1=0$ and $k_2=2$.

To give a  generator in $A_{0,2}$, we first need a function $f:\{1, \ldots, 2+2\} \to \{1, \ldots, 2+1\}$ such that $f(2) \leq 2$. We choose
\begin{align*} 1 \mapsto 3 &&2 \mapsto 2&&3 \mapsto 3&&4 \mapsto 2 .\end{align*}
So $\{i \ | f(i)>n_2=2\}=\{1,3\}$ and therefore we need a function $s:\{1, \ldots, 4\} \backslash \{1,3\} \to \{0,1\}$. We take
$s(2)=0$ and $s(4)=1$. 

We first describe the $x_1$ part in Definition \ref{def:elements}.
We have $x_1=z_1 \otimes z_2 \otimes z_3 \otimes z_4$ with $z_1=p$ (since $f(1)>2$), $z_2=sh^2$ (since $f(2) \leq 2$ and $s(2)=0$), $z_3=\id$ (since $f(3) > 2)$ and $z_4=B^0$ (since $f(4)\leq 2$ and $s(4)=1$).

We know that $p$ acts trivially on all degrees greater zero, so $x_1$ can only act non-trivial on degrees $(0,l)$ for some positive $l$. The degree $(0,2)$ part of $x_1$ is illustrated in Figure \ref{fig:4elem}.
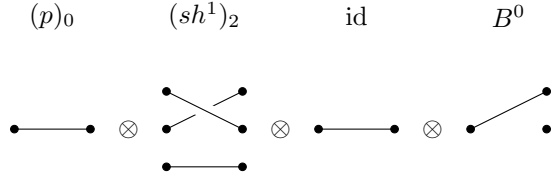
\begin{figure}[!ht]

 \center
\begin{tikzpicture}[ cross line/.style={preaction={draw=white, -,line width=6pt}}]
\begin{scope}[shift={(0,0)}]
\node at (0.5,1.5){$(p)_0$};
\filldraw (0,0) circle (0.05);
\filldraw (1,0) circle (0.05);
\draw(0,0) -- (1, 0);
\end{scope}
\node at (1.5,0){$\otimes$};

\begin{scope}[shift={(2,-0.5)}]
\node at (0.5,2){$(sh^1)_2$};
\begin{scope}
\draw(0,0) -- (1, 0);
\draw(0,0.5) -- (1, 1);
\draw[cross line](0,1) -- (1, 0.5);
\filldraw (0,0) circle (0.05);
\filldraw (0,0.5) circle (0.05);
\filldraw (0,1) circle (0.05);
\filldraw (1,0) circle (0.05);
\filldraw (1,0.5) circle (0.05);
\filldraw (1,1) circle (0.05);
\end{scope}
\node at (1.5,0.5){$\otimes$};
\end{scope}

\begin{scope}[shift={(4,0)}]
\node at (0.5,1.5){$\id$};
\filldraw (0,0) circle (0.05);
\filldraw (1,0) circle (0.05);
\draw(0,0) -- (1, 0);
\end{scope}
\node at (5.5,0){$\otimes$};
\begin{scope}[shift={(6,0)}]
\node at (0.5,1.5){$B^0$};
\filldraw (0,0) circle (0.05);
\filldraw (1,0) circle (0.05);
\filldraw (1,0.5) circle (0.05);
\draw(0,0) -- (1, 0.5);
\end{scope}

\end{tikzpicture}
\caption{The operation ${(x_1)}_{0,2}$}
	\label{fig:4elem}
\end{figure}

Next we need to illustrate the composition with $x_2$. The element $x_2$ was defined to take the shuffle products of the outputs which agree on a $f(i)\leq n_2$ and the ordinary product for those outputs which agree on a $f(i)>n_2$.
We have $f(3)=f(1)=3$. This means, that the single outputs of segment $3$ and $1$ are multiplied and give the output of segment $3$.
Moreover $f(4)=f(2)=2$. Here we have to apply the shuffle product. The fourth segment has $2$ outputs, the second has $3$, so the second output segment will have $2+3-1=4$ outputs. The first output of both segments is multiplied together and we take the shuffles of the rest. We first illustrate what happens on outputs in general (i.e. illustrate $m^{2,4}$) and then plug in our elements. 
\begin{figure}[!ht]

 \center
\begin{tikzpicture}[ cross line/.style={preaction={draw=white, -,line width=6pt}}]
\begin{scope}[shift={(0,0)}]
\node (a1) at (-0.2,-0.2){};
\node(a2) at (-0.2,1.2){};
\node (a3) at (-0.2,1.8){};
\node(a4) at (-0.2,2.7){};
   \draw[thick,decorate,decoration={brace, amplitude=4pt}] 
        (a1) -- (a2) node[midway, left=3pt]{old out from $2$nd};
    \draw[thick,decorate,decoration={brace, amplitude=4pt}] 
        (a3) -- (a4) node[midway, left=3pt]{old out from $4$th};
\draw(0,0.5) -- (1, 1);
\draw(0,1) -- (1,1.5);
\draw[cross line](0,2) -- (0.8, 0);
\draw[cross line](0,2.5) -- (1, 0.5);
\draw(0,0) -- (1, 0);
\filldraw (0,0) circle (0.05);
\filldraw (0,0.5) circle (0.05);
\filldraw (0,1) circle (0.05);
%\draw(-0.1,1.5) --(0.1, 1.5);
\filldraw (0,2) circle (0.05);
\filldraw (0,2.5) circle (0.05);
\filldraw (1,0) circle (0.05);
\filldraw (1,0.5) circle (0.05);
\filldraw (1,1) circle (0.05);
\filldraw (1,1.5) circle (0.05);
\node at (1.5,1){$\pm$};
\end{scope}
\begin{scope}[shift={(2.5,0)}]
\node (a1) at (-0.2,-0.2){};
\node(a2) at (-0.2,1.2){};
\node (a3) at (-0.2,1.8){};
\node(a4) at (-0.2,2.7){};
   \draw[thick,decorate,decoration={brace, amplitude=4pt}] 
        (a1) -- (a2) node[midway, left=3pt]{};
    \draw[thick,decorate,decoration={brace, amplitude=4pt}] 
        (a3) -- (a4) node[midway, left=3pt]{};
\draw(0,0.5) -- (1, 0.5);
\draw[cross line](0,1) -- (1,1.5);
\draw[cross line](0,2) -- (0.8, 0);
\draw[cross line](0,2.5) -- (1, 1);
\draw(0,0) -- (1, 0);
\filldraw (0,0) circle (0.05);
\filldraw (0,0.5) circle (0.05);
\filldraw (0,1) circle (0.05);
%\draw(-0.1,1.5) --(0.1, 1.5);
\filldraw (0,2) circle (0.05);
\filldraw (0,2.5) circle (0.05);
\filldraw (1,0) circle (0.05);
\filldraw (1,0.5) circle (0.05);
\filldraw (1,1) circle (0.05);
\filldraw (1,1.5) circle (0.05);
\node at (1.5,1){$\pm$};
\end{scope}
\begin{scope}[shift={(5,0)}]
\node (a1) at (-0.2,-0.2){};
\node(a2) at (-0.2,1.2){};
\node (a3) at (-0.2,1.8){};
\node(a4) at (-0.2,2.7){};
   \draw[thick,decorate,decoration={brace, amplitude=4pt}] 
        (a1) -- (a2) node[midway, left=3pt]{};
    \draw[thick,decorate,decoration={brace, amplitude=4pt}] 
        (a3) -- (a4) node[midway, left=3pt]{};
\draw(0,0.5) -- (1, 0.5);
\draw(0,1) -- (1,1);
\draw[cross line](0,2) -- (0.8, 0);
\draw[cross line](0,2.5) -- (1, 1.5);
\draw(0,0) -- (1, 0);
\filldraw (0,0) circle (0.05);
\filldraw (0,0.5) circle (0.05);
\filldraw (0,1) circle (0.05);
%\draw(-0.1,1.5) --(0.1, 1.5);
\filldraw (0,2) circle (0.05);
\filldraw (0,2.5) circle (0.05);
\filldraw (1,0) circle (0.05);
\filldraw (1,0.5) circle (0.05);
\filldraw (1,1) circle (0.05);
\filldraw (1,1.5) circle (0.05);
\end{scope}
\end{tikzpicture}
\caption{merging of outputs}
	\label{fig:outputs}
\end{figure}
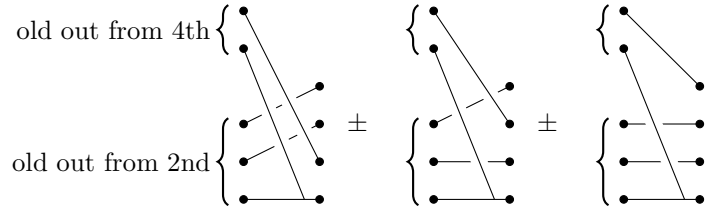
In Figure \ref{fig:outputs} on the left are the old outputs of the two elements (i.e. $3$ and $2$ outputs) and on the right their merged outputs.
Now we can take everything together, i.e. plug in our elements from before to compute $x_{f,s}= x_2 \circ x_1$. The degree $(0,2)$ part of $x_{f,s}$ is illustrated in Figure \ref{fig:final}.
\begin{figure}[!ht]

 \center
\begin{tikzpicture}[ cross line/.style={preaction={draw=white, -,line width=6pt}}]
\begin{scope}[scale=0.8]
\begin{scope}[shift={(0,0)}]
\node at (-1.5,2){$\pm$};
\node (a1) at (1.7,0.8){};
\node(a2) at (1.7,2.7){};

\node (a3) at (-0.2,0.8){};
\node(a4) at (-0.2,2.2){};
\node at (-0.7, 4){$(4)$};
\node at (-0.7, 3){$(3)$};
\node at (-0.7, 0){$(1)$};
\node at (2.2, 3.5){$(3)$};
\node at (2.2, 0){$(1)$};
   \draw[thick,decorate,decoration={brace, amplitude=4pt}] 
        (a2) -- (a1) node[midway, right=3pt]{$(2)$};
    \draw[thick,decorate,decoration={brace, amplitude=4pt}] 
        (a3) -- (a4) node[midway, left=3pt]{$(2)$};%        
\draw(0,0) -- (1.3, 3.5);

\draw[cross line](0,1) -- (1.5, 1);
\draw[cross line](0,1.5) -- (1.5,2.5);
\draw[cross line](0,2) -- (1.5, 2);

\draw(0,3) -- (1.3, 3.5);
\draw(1.3,3.5)--(1.5,3.5);

\draw[cross line](0,4) -- (1.5, 1.5);

\filldraw (0,0) circle (0.05);

\filldraw (0,1) circle (0.05);
\filldraw (0,1.5) circle (0.05);
\filldraw (0,2) circle (0.05);

\filldraw (0,3) circle (0.05);

\filldraw (0,4) circle (0.05);

\filldraw (1.5,0) circle (0.05);

\filldraw (1.5,1) circle (0.05);
\filldraw (1.5,1.5) circle (0.05);
\filldraw (1.5,2) circle (0.05);
\filldraw (1.5,2.5) circle (0.05);

\filldraw (1.5,3.5) circle (0.05);

\end{scope}
\begin{scope}[shift={(5,0)}]
\node at (-1.7,2){$\pm$};
\node (a1) at (1.7,0.8){};
\node(a2) at (1.7,2.7){};

\node (a3) at (-0.2,0.8){};
\node(a4) at (-0.2,2.2){};
\node at (-0.7, 4){$(4)$};
\node at (-0.7, 3){$(3)$};
\node at (-0.7, 0){$(1)$};
\node at (2.2, 3.5){$(3)$};
\node at (2.2, 0){$(1)$};
   \draw[thick,decorate,decoration={brace, amplitude=4pt}] 
        (a2) -- (a1) node[midway, right=3pt]{$(2)$};
    \draw[thick,decorate,decoration={brace, amplitude=4pt}] 
        (a3) -- (a4) node[midway, left=3pt]{$(2)$};
        \draw(0,0) -- (1.3, 3.5);

\draw[cross line](0,1) -- (1.5, 1);
\draw[cross line](0,1.5) -- (1.5,2.5);
\draw[cross line](0,2) -- (1.5, 1.5);

\draw(0,3) -- (1.3, 3.5);
\draw(1.3,3.5)--(1.5,3.5);

\draw[cross line](0,4) -- (1.5, 2);

\filldraw (0,0) circle (0.05);

\filldraw (0,1) circle (0.05);
\filldraw (0,1.5) circle (0.05);
\filldraw (0,2) circle (0.05);

\filldraw (0,3) circle (0.05);

\filldraw (0,4) circle (0.05);

\filldraw (1.5,0) circle (0.05);

\filldraw (1.5,1) circle (0.05);
\filldraw (1.5,1.5) circle (0.05);
\filldraw (1.5,2) circle (0.05);
\filldraw (1.5,2.5) circle (0.05);

\filldraw (1.5,3.5) circle (0.05);

%\node at (2,1){$-$};
\end{scope}
\begin{scope}[shift={(10,0)}]
\node at (-1.7,2){$\pm$};
\node (a1) at (1.7,0.8){};
\node(a2) at (1.7,2.7){};

\node (a3) at (-0.2,0.8){};
\node(a4) at (-0.2,2.2){};
\node at (-0.7, 4){$(4)$};
\node at (-0.7, 3){$(3)$};
\node at (-0.7, 0){$(1)$};
\node at (2.2, 3.5){$(3)$};
\node at (2.2, 0){$(1)$};
   \draw[thick,decorate,decoration={brace, amplitude=4pt}] 
        (a2) -- (a1) node[midway, right=3pt]{$(2)$};
    \draw[thick,decorate,decoration={brace, amplitude=4pt}] 
        (a3) -- (a4) node[midway, left=3pt]{$(2)$};       
\draw(0,0) -- (1.3, 3.5);

\draw[cross line](0,1) -- (1.5, 1);
\draw[cross line](0,1.5) -- (1.5,2);
\draw[cross line](0,2) -- (1.5, 1.5);

\draw(0,3) -- (1.3, 3.5);
\draw(1.3,3.5)--(1.5,3.5);

\draw[cross line](0,4) -- (1.5, 2.5);

\filldraw (0,0) circle (0.05);

\filldraw (0,1) circle (0.05);
\filldraw (0,1.5) circle (0.05);
\filldraw (0,2) circle (0.05);

\filldraw (0,3) circle (0.05);

\filldraw (0,4) circle (0.05);

\filldraw (1.5,0) circle (0.05);

\filldraw (1.5,1) circle (0.05);
\filldraw (1.5,1.5) circle (0.05);
\filldraw (1.5,2) circle (0.05);
\filldraw (1.5,2.5) circle (0.05);

\filldraw (1.5,3.5) circle (0.05);

\end{scope}

\end{scope}
\end{tikzpicture}
\caption{The operation $(x_{f,s})_{0,2}$}	\label{fig:final}
\end{figure}
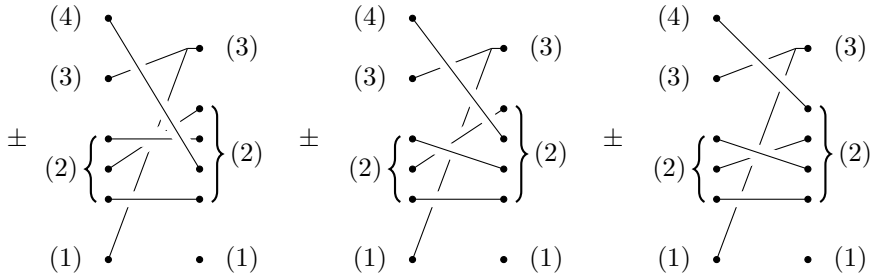

We also describe how the natural transformation associated to this acts on the Hochschild homology of a commutative algebra $A$. The element $x_{f,s}=x_2 \circ x_1$ corresponds to a map
\[C_*(A,A) \otimes C_*(A,A) \otimes A \otimes A \to C_*(A,A) \otimes C_*(A,A) \otimes A.\]
In the pictures we have given a description (up to sign) of what happens to an element with the first tensor factor being of length $1$ and the second tensor factor of length $3$, i.e. for 
\[(q_0) \otimes (r_0 \otimes r_1 \otimes r_2) \otimes (g) \otimes (h) \in C_0(A,A) \otimes C_2(A,A) \otimes A \otimes A\] 
the pictures in Figure \ref{fig:final} describe the operation by plugging in $q_0$ at $(1)$, $(r_0 \otimes r_1 \otimes r_2)$ at $(2)$, $g$ at $(3)$ and $h$ at $(4)$. If there is no input mapping to an output, a unit is inserted at that part of the output. Doing this in the same order as the pictures are given in Figure \ref{fig:final}, we get
\begin{align*}
(q_0) \otimes (r_0 &\otimes r_1 \otimes r_2) \otimes g \otimes h \mapsto \pm (1) \otimes (r_0 \otimes h \otimes r_2 \otimes r_1) \otimes (q_0 \cdot g) \\
& \pm (1) \otimes (r_0 \otimes r_2 \otimes h \otimes r_1) \otimes (q_0 \cdot g)  \pm (1) \otimes (r_0 \otimes r_2 \otimes r_1 \otimes h) \otimes (q_0 \cdot g) \\
& \in C_0(A,A) \otimes C_3(A,A) \otimes A.
\end{align*}

\end{Example}

\subsection{Outline of the proof of Theorem \ref{th:mainmain}}

The proof of Theorem \ref{th:mainmain} structures as follows:
\begin{itemize}
\item In Section \ref{sec:tildeD} we construct a subcomplex $D$ of the formal operations such that 
$D \hookrightarrow \Nat(\oc{n_1}{m_1}, \oc{n_2}{m_2})$ is a weak equivalence.
\item In Section \ref{sec:contr} we split $D \cong \widetilde{D} \oplus \widetilde{D}'$ and prove that the homology of  $\widetilde{D}'$ vanishes.
\item In Section \ref{sec:applez} we define another complex of operations $\widehat{D}$ and show that the Eilenberg-Zilber quasi-isomorphism defines a map $\widehat{D} \xrightarrow{EZ} \widetilde{D}$  which on each component on the level of elements corresponds to ``multiplication with the elements $x_2$''  as defined in Definition \ref{def:elements}.
\item Last, in Section \ref{sec:oper} similarly to the proof of  Theorem \ref{Th:akbk} we show that $\widehat{D'}$ is spanned by elements $x_1$ as defined in Definition \ref{def:elements}.
\end{itemize}

Before we can start with our actual computations, we recall a few results about total complexes of multi-chain complexes and the order of totalization. The two propositions follow from the fact that the spectral sequences of the half plane double complexes are conditionally convergent together with work of Boardman \cite[Theorem 7.2]{boar99}.

\begin{Prop}[{\cite[Cor. B.12]{kla13c}}] \label{prop:double}
Let $f:C_{p,q} \to D_{p,q}$ be a map of left (respectively right) halfplane double complexes. If $f$ is a quasi-isomorphism with respect to the vertical differential (i.e. an isomorphism after taking homology in the vertical direction), $f$ induces a quasi-isomorphism $f:\prod_{p,q} C_{p,q} \to \prod_{p,q} D_{p,q}$ respectively $f:\bigoplus_{p,q} C_{p,q} \to \bigoplus_{p,q} D_{p,q}$.
\end{Prop}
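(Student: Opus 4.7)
The plan is to apply the column filtration spectral sequence to both totalizations and invoke the comparison theorem for conditionally convergent spectral sequences due to Boardman.

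First, I would equip the totalizations with column filtrations. For the left halfplane product case, take the decreasing filtration $F^p \Totpr C = \prod_{p' \geq p,\, q} C_{p', q}$; for the right halfplane sum case, take the increasing filtration $F_p \Totpl C = \bigoplus_{p' \leq p,\, q} C_{p', q}$. At $E_0$ the associated graded in each filtration degree is a single column $C_{p,*}$ with its vertical differential, so $E_1^{p,q} = H_q^{\mathrm{vert}}(C_{p,*})$ and the $d_1$ differential is induced by the horizontal differential of the double complex. Analogously for $D$. The hypothesis that $f$ is a vertical quasi-isomorphism is precisely the statement that $f$ induces an isomorphism on the $E_1$-page.

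Second, I verify conditional convergence. The halfplane assumption, matched to the chosen totalization (product for left halfplane, sum for right halfplane), ensures that the filtration is complete Hausdorff in the product case and exhaustive in the sum case, with only appropriately bounded columns contributing in each fixed total degree. By Boardman's criterion both spectral sequences are conditionally convergent to the homology of the total complex. The morphism of spectral sequences induced by $f$ is an $E_1$-isomorphism, so Boardman's Theorem 7.2 yields an isomorphism on both $E_\infty$ and $RE_\infty$, and conditional convergence then promotes this to an isomorphism on the abutments. This gives the desired quasi-isomorphism on totalizations.

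The main obstacle, and really the only content beyond bookkeeping, is making sure the conditional convergence hypothesis is correctly in place: the halfplane shape must be matched with the correct totalization (product for left, sum for right), otherwise the filtration fails to be complete Hausdorff or exhaustive and Boardman's framework does not directly apply. Once this matching is noted, the identification of $E_1$ with vertical homology is standard and the comparison of abutments is formal from Theorem 7.2.
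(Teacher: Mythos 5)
Your approach matches the paper's: the paper gives no proof of this proposition, only remarking that it ``follows from the fact that the spectral sequences of the half plane double complexes are conditionally convergent together with work of Boardman [Theorem 7.2],'' and you flesh out exactly that route via the column filtration, identification of $E_1$ with vertical homology, and matching of half-plane shape to totalization. One small caveat worth naming: Boardman's Theorem 7.2 promotes an $E_1$-isomorphism of conditionally convergent spectral sequences to an isomorphism of abutments only after one also verifies that the derived term $RE_\infty$ vanishes (or, in Boardman's refinement, that the whole-plane obstruction vanishes) — the isomorphism $RE_\infty \cong R\bar{E}_\infty$ alone does not suffice, so your phrase ``conditional convergence then promotes this'' glosses over the one nontrivial hypothesis of the theorem. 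Since the paper itself also elides this point and defers to the cited Cor.\ B.12 of \cite{kla13c}, you have reproduced the paper's argument at essentially the same level of detail.
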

\begin{Prop}[{\cite[Cor. B.14]{kla13c}}]
\label{cor:chhtpy}
Let $f:C_{p,q} \to D_{p,q}$ be a map of left (respectively right) halfplane double complexes. If $f$ is a chain homotopy equivalence with respect to the horizontal differential (i.e. there exist $g$ and $h$ s.t. $d_{hor} \circ h + h \circ d_{hor} = g \circ f -id$ and $h'$ such that $d_{hor} \circ h' + h' \circ d_{hor} = f \circ g -id$), $f$ induces a quasi-isomorphism $f:\prod_{p,q} C_{p,q} \to \prod_{p,q} D_{p,q}$ respectively $f:\bigoplus_{p,q} C_{p,q} \to \bigoplus_{p,q} D_{p,q}$.
\end{Prop}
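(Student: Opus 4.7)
The plan is to generalize the approach of Theorem \ref{Th:akbk} to the iterated situation. Since $\Nat_\Com(\oc{n_1}{m_1},\oc{n_2}{m_2}) \cong D^{n_1}C^{n_2}(\Com(-,-))(m_2)(m_1)$ is an iterated (co)Hochschild construction, and $\Com(k,-) \cong \Com(1,-)^{\otimes k}$ as vector spaces, the same Alexander--Whitney reduction that replaced $\bC(\Com(k,-))$ by $H_*(S^1)^{\otimes k}$ in Section \ref{sec:conHH} should apply factor-by-factor in the multi-complex. Propositions \ref{prop:double} and \ref{cor:chhtpy} will be the workhorses for promoting these levelwise quasi-isomorphisms to the total complex, since the ambient complex involves both direct sums (on the Hochschild side) and direct products (on the coHochschild side).

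First I would construct a subcomplex $D \hookrightarrow \Nat_\Com(\oc{n_1}{m_1},\oc{n_2}{m_2})$ by replacing each input Hochschild factor by its $AW$-reduced form $H_*(S^1)^{\otimes k_i}$, treating the $m_1$ algebra inputs as degree-zero pieces, and using Proposition \ref{cor:chhtpy} to conclude that the inclusion is a quasi-isomorphism. Following the outline given, I would then split $D \cong \widetilde{D} \oplus \widetilde{D}'$ along the output indices that support nontrivial classes (essentially the zero- and one-dimensional parts of $H_*(S^1)$, analogous to the $a^k$ and $b^k$ of Section \ref{sec:HHS1}). The complementary summand $\widetilde{D}'$ should be acyclic by an explicit contracting homotopy reminiscent of the one built into the $AW$--$EZ$ equivalence, and I would verify acyclicity degree-wise and then invoke Proposition \ref{cor:chhtpy} again to conclude the vanishing of $H_*(\widetilde{D}')$.

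The third step is to introduce a companion complex $\widehat{D}$ which separates the input analysis from the output assembly. Concretely, $\widehat{D}$ records the data of an input choice $z_i$ (an $sh^{k_i}$, $B^{k_i}$, $p$, $\id$, $sh^0$, or $B^0$ as in Definition \ref{def:elements}) together with a routing function $f$ and a sign choice $s$, with no output combination yet; the Eilenberg--Zilber map $EZ: \widehat{D} \to \widetilde{D}$ then performs the shuffling across inputs sent to the same output by $f$, which is exactly postcomposition with the element $x_2$ of Definition \ref{def:elements}. I would verify that $EZ$ is a quasi-isomorphism by the classical $AW$--$EZ$ argument together with Proposition \ref{prop:double}. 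Finally, on $\widehat{D}$ itself the analysis of Sections \ref{sec:AW} and \ref{sec:res} applies to each input factor independently: on the $i$-th input factor the cycles $sh^{k_i}$ and $B^{k_i}$ (restricted to the projection $p$ when $f(i) > n_2$) span the homology by exactly the same Vandermonde-style induction on $k$ used in the proof of Theorem \ref{Th:akbk}. Taking products over all $k_i$ assembles these into $\prod_{k_1,\ldots,k_{n_1}} A_{k_1,\ldots,k_{n_1}}$, and composing with $EZ$ expresses the result as the $x_{f,s} = x_2 \circ x_1$ of Definition \ref{def:elements}.

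The hard part will be controlling the combinatorics and signs once multiple inputs and outputs interact. In particular, the $AW$-reduction and $EZ$-section must be shown to interact coherently across all inputs and outputs, which demands passing back and forth between levelwise arguments and total-complex conclusions using Propositions \ref{prop:double} and \ref{cor:chhtpy} multiple times. A secondary difficulty lies in verifying that the routing function $f$ and sign data $s$ exhaust the generating cycles; linear independence of the $x_{f,s}$ should nevertheless follow from the same triangular induction that established the bijection between $(c_k)$ and $\sum c_k [sh^k]$ at the end of the proof of Theorem \ref{Th:akbk}, applied in each input slot separately.
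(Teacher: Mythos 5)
Your proposal does not address the statement it is supposed to prove. Proposition \ref{cor:chhtpy} is a self-contained piece of homological algebra: given a map $f\colon C_{p,q}\to D_{p,q}$ of left (resp.\ right) half-plane double complexes which is a chain homotopy equivalence with respect to the horizontal differential, the induced map of product (resp.\ sum) total complexes is a quasi-isomorphism. What you have sketched instead is the architecture of the proof of Theorem \ref{th:mainmain} --- the subcomplex $D$, the splitting $\widetilde{D}\oplus\widetilde{D}'$, the Eilenberg--Zilber comparison $\widehat{D}\to\widetilde{D}$, the generators $x_{f,s}$ of Definition \ref{def:elements} --- and your sketch explicitly invokes Propositions \ref{prop:double} and \ref{cor:chhtpy} as its ``workhorses''. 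As an argument for Proposition \ref{cor:chhtpy} itself this is circular, and none of its steps engages with the actual content of the proposition, namely how a levelwise (row-wise) statement about a double complex is promoted to a statement about the totalizations $\prod_{p,q}$ and $\bigoplus_{p,q}$.

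A genuine proof has to confront a convergence problem that your outline never touches. The homotopies $h,h'$ are homotopies only for $d_{hor}$ and need not commute with the vertical differential, so they do not assemble naively into a contraction or homotopy on the total complex; moreover a mere horizontal quasi-isomorphism would not suffice in this unbounded setting, which is exactly why the hypothesis is a homotopy equivalence and why this proposition is stated separately from Proposition \ref{prop:double}. The paper itself does not reprove the statement: it imports it from \cite[Cor.~B.14]{kla13c} and records that it follows from the conditional convergence of the spectral sequences associated to half-plane double complexes together with Boardman's comparison theorem \cite[Theorem 7.2]{boar99}. A blind proof would therefore have to run an argument of that type --- filter the appropriate total complex, check conditional convergence for the half-plane situation at hand, verify that the row-wise homotopy equivalence (for instance via the mapping cone, whose rows are contractible) yields an isomorphism on the relevant page, and then apply the comparison theorem, dealing with the $\lim^1$-type obstructions --- none of which appears in your proposal.
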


 We will use the two propositions frequently throughout the computations.

\subsection{A smaller subcomplex of the operations}\label{sec:tildeD}

Recall from the definition of the iterated Hochschild construction in Section \ref{sec:form} that
\begin{align*}N:=&\Nat_\Com(\oc{n_1}{m_1}, \oc{n_2}{m_2})\\
\cong &D_{S_1}\cdots D_{S_1}(C_{S_1} \cdots C_{S_1}(\Com(-,-))(m_2))(m_1)\\
\cong&\prod_{h_1, \ldots, h_{n_1}} \bigoplus_{j_1, \ldots, j_{n_2}} C^{h_1} \cdots C^{h_{n_1}}C_{j_1} \cdots C_{j_{n_2}} (\Com(-+m_1,-+m_2)),\end{align*}
the chain complex of a multi cosimplicial-simplicial abelian group which for fixed $h_i$'s and $j_l$'s is given by
$\Com(h_1+1+ \cdots + h_{n_1}+1+m_1, j_1+1 + \cdots +j_{n_2}+1+m_2)$.
To treat $m_1$ and $m_2$ equal to the other inputs, we can view them as extra directions of the multicomplex which we only use in degree zero, i.e.
\[N \cong \prod_{h_1, \ldots, h_{n_1}} \bigoplus_{j_1, \ldots, j_{n_2}} C^{h_1} \cdots C^{h_{n_1}} C^0 \cdots C^0 C_{j_1} \cdots C_{j_{n_2}}C_0 \cdots C_0 (\Com(-,-)).\]

To rewrite the complex even further, we need some notation:

Let $A$ be a $d$--multisimplicial abelian group with indexing set $\{1, \cdots, d\}$. 
For a set $M=\{m_1, \cdots, m_n\} \subseteq \{1, \cdots, d\}$ define $\diag^M A$ as the $d-(n-1)$--multisimplicial abelian group where we have taken the diagonal in $m_1, \cdots, m_n$, i.e. these indices now agree. Write $d_{\underline{h}}=h_1+1+\cdots+h_{n_1}+1+m_1$.

Since $\Com(h,l) \cong \Com(1,l)^{\otimes h}$, we can rewrite
\begin{align*} & C^{h_1} \cdots C^{h_{n_1}} C^0 \cdots C^0 C_{j_1} \cdots C_{j_{n_2}}C_0 \cdots C_0 (\Com(-,-)) \\
\cong &  C^{h_1} \cdots C^{h_{n_1}} C^0 \cdots C^0 C_{j_1} \cdots C_{j_{n_2}}C_0 \cdots C_0 ( \diag_\bullet^{all}(\underbrace{\Com(1,-) \otimes  \cdots \otimes\Com(1,-)}_{d_{\underline{h}}}))\end{align*}
where $all$ indicates the set of all indices. The cosimplicial boundary maps are given by doubling a factor $\Com(1,-)$. Let $\pi \in \Com(j_1+1 + \cdots + j_{n_2}+1+m_2, n_2+m_2)$ be the projection to the intervals, i.e. $\pi(i)=k$ for $\sum_{i<k} (j_{i}+1) \leq i < \sum_{i \leq k} (j_i+1)$ (setting $j_l=0$ for $l > n_2$). Then given a generator  $g \in \Com(d_{\underline{h}}, j_1+1 + \cdots + j_{n_2}+1+m_2)$ (i.e. $g$ is a map of finite sets) we define $f=\pi \circ g: \{1, \cdots, d_{\underline{h}}\} \to \{1, \cdots, n_2+m_2\}$. The map $f$ is invariant under applying simplicial boundary maps to $g$ and hence for each fixed tuple $\{h_1, \ldots, h_{n_1}\}$ we can split
\[\bigoplus_{j_1, \ldots, j_{n_2}} C_{j_1} \cdots C_{j_{n_2}}C_0 \cdots C_0 ( \diag_\bullet^{all}(\underbrace{\Com(1,-) \otimes  \cdots \otimes\Com(1,-)}_{d_{\underline{h}}}))\]
into subcomplexes indexed by maps $f:\{1, \cdots, d_{\underline{h}}\} \to \{1, \cdots, n_2+m_2\}$ such that the map in the $i$--th tensor factor maps $1$ to the $f(i)$--th interval. We write $\Com_{f(j)}(1,-)$ to indicate in which interval $1$ is mapped.

Fix such a map $f$. Now we focus on a single $j_i$ for a moment. For simplicity of notation we choose $i=1$. The complex 
\[C_*(\diag_\bullet^{all}(\underbrace{\Com_{f(1)}(1,-+j_2+1+ \cdots+m_2) \otimes  \cdots \otimes\Com_{f(d_{\underline{h}})}(1,-+j_2+1+ \cdots+m_2)}_{d_{\underline{h}}}))\]
by the Eilenberg-Zilber Theorem (cf. \cite[Sec. 8.5]{weib95}) is quasi-isomorphic to the totalization of the complex
\[\underbrace{C_*(\Com_{f(1)}(1,-+j_2+1+ \cdots+m_2)) \otimes  \cdots \otimes C_*(\Com_{f(d_{\underline{h}})}(1,-+j_2+1+ \cdots+m_2))}_{d_{\underline{h}}}.\]
Hence we can look at each $C_*(\Com_{f(j)}(1,-+j_2+1+ \cdots+m_2))$ separately. Assume that this is the $j$--th factor, i.e. if $f(j)>1$ we do not hit the interval belonging $*$. Then the differential on  $C_*(\Com_{f(j)}(1,-+j_2+1+ \cdots+m_2))$ is alternate zero and the identity and we have $C_*(\Com_{f(j)}(1,-+j_2+1+ \cdots+m_2)) \simeq C_0(\Com_{f(j)}(1,-+j_2+1+ \cdots+m_2))\cong \Com_{f(j)}(1,j_2+1+ \cdots+m_2)$ where the last isomorphism follows from the fact, that it does not matter, whether we include a point, we never map to, or not.
Iterating this argument for all $1 \leq i \leq j_{n_2}$ (and using Proposition \ref{prop:double}), we can contract
\[\bigoplus_{j_1, \ldots, j_{n_2}} C_{j_1} \cdots C_{j_{n_2}}C_0 \cdots C_0 ( \diag_\bullet^{all}(\underbrace{\Com_{f(1)}(1,-) \otimes  \cdots \otimes\Com_{f(d_{\underline{h}})}(1,-)}_{d_{\underline{h}}}))\]
to
\[\bigoplus_{j_1, \ldots, j_{n_2}} C_{j_{f(1)}}(\Com(1, -+1)) \otimes \cdots \otimes C_{f(d_{\underline{h}})}(\Com(1, -+1))\]
with $j_l:=0$ if $n_2<l \leq n_2+m_2$. The differential in the $i$--th direction comes from the simplicial boundaries acting diagonally on all factors $j$ with $f(j)=i$. Concluding, we can rewrite this complex as
\begin{align*}\bigoplus_{j_1, \cdots, j_{n_2}} \bigoplus_{\substack{f:\{1, \cdots, d_{\underline{h}}\}\\ \to \{1 , \cdots, n_2+m_2\}}} & C_{j_1} \cdots C_{j_{n_2}} C_0 \cdots C_0 ( \\
& \hspace{-1cm} \diag^{\{i | f(i)=1\}} \cdots  \diag^{\{i | f(i)=n_2+m_2\}}(\underbrace{\Com(1,-) \otimes \cdots \otimes\Com(1,-)}_{d_{\underline{h}}})\end{align*}
and (after using Proposition \ref{prop:double} and Proposition \ref{cor:chhtpy}) get a quasi-isomorphism
\begin{align*} N \simeq & \prod_{h_1, \cdots, h_{n_1}}\bigoplus_{\substack{f:\{1, \cdots, d_{\underline{h}}\}\\ \to \{1 , \cdots, n_2+m_2\}}}  \bigoplus_{j_1, \cdots, j_{n_2}} C^{h_1} \cdots C^{h_{n_1}} C^0 \cdots C^0 C_{j_1} \cdots C_{j_{n_2}}C_0 \cdots C_0 ( \\
&  \diag^{\{i | f(i)=n_2+m_2\}} \cdots  \diag^{\{i | f(i)=1\}}(\underbrace{\Com(1,-) \otimes \cdots \otimes\Com(1,-)}_{d_{\underline{h}}}).\end{align*}
We denote the last complex by $D$.

\subsection{Splitting off an acyclic subcomplex}\label{sec:contr}
As a next step we split off an acyclic  subcomplex from $D$.

Recall $d_{\underline{h}}=h_1+1+\cdots+h_{n_1}+1+m_1$ and define the subset $F \subset \left\{f: \{1, \cdots,d_{\underline{h}}\}\to \{1, \cdots, n_2+m_2\}\right\}$ by
\begin{align*}F=\left\{f: \{1, \cdots, d_{\underline{h}}\}\to \{1, \cdots, n_2+m_2\} | \text{ s.t. } f|_{\{\sum_{j<i} (h_{j}+1) +1, \cdots, \sum_{j \leq i}(h_{j}+1)\}} \text{ is constant for all } i \leq n_2\right\}\end{align*}
i.e. all the values belonging to one $h_i$ are equal.

The complement of this set is given by
\begin{align*}F'=\left\{f: \{1, \cdots, d_{\underline{h}}\}\to \{1, \cdots, n_2+m_2\} | \text{ s.t. } \exists \ i \text{ s.t. } f|_{\{\sum_{j<i} (h_{j}+1) +1, \cdots, \sum_{j \leq i}(h_{j}+1)\}} \text{ is not constant}\right\}.
\end{align*}
Define \begin{align*} \widetilde{D}:=& \prod_{h_1, \cdots, h_{n_1}}\bigoplus_{F} \bigoplus_{j_1, \cdots, j_{n_2}} C^{h_1} \cdots C^{h_{n_1}} C^0 \cdots C^0 C_{j_1} \cdots C_{j_{n_2}}C_0 \cdots C_0(\\
& \diag^{\{i | f(i)=n_2+m_2\}} \cdots  \diag^{\{i | f(i)=1\}} ( \underbrace{\Com(-,-) \otimes \cdots \otimes  \Com(-,-)}_{d_{\underline{h}}}))
\end{align*}
and
 \begin{align*} \widetilde{D}':=& \prod_{h_1, \cdots, h_{n_1}}\bigoplus_{F'} \bigoplus_{j_1, \cdots, j_{n_2}} C^{h_1} \cdots C^{h_{n_1}} C^0 \cdots C^0 C_{j_1} \cdots C_{j_{n_2}}C_0 \cdots C_0(\\
& \diag^{\{i | f(i)=n_2+m_2\}} \cdots  \diag^{\{i | f(i)=1\}} ( \underbrace{\Com(-,-) \otimes \cdots \otimes  \Com(-,-)}_{d_{\underline{h}}})).
\end{align*}
Then we can show:
\begin{Lemma}
Both $\widetilde{D}$ and $\widetilde{D}'$ are subcomplexes of $D$ and we have a splitting
\[D \cong \widetilde{D} \oplus \widetilde{D}'.\]
\end{Lemma}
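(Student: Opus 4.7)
My plan is to split the proof into two stages: first establish the decomposition at the level of graded vector spaces, then verify that the differential preserves each summand. The vector-space splitting is formal. By construction, $F$ and $F'$ partition the set of functions $f:\{1,\cdots,d_{\underline{h}}\}\to\{1,\cdots,n_2+m_2\}$, and $\widetilde{D}$, $\widetilde{D}'$ restrict the same direct-sum construction defining $D$ to these complementary index sets, so $D\cong\widetilde{D}\oplus\widetilde{D}'$ as graded vector spaces.

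For closure under the differential, I would decompose the total differential into the inner $\Com$-differential, the simplicial boundaries in the $j_\ell$-directions, and the cosimplicial coboundaries in the $h_i$-directions. The first two pieces preserve each $f$-summand: the construction of $D$ in Section~\ref{sec:tildeD} already used that $f=\pi\circ g$ is invariant under simplicial face maps, so these pieces manifestly preserve both $F$ and $F'$. The key step is therefore analysing the cosimplicial coboundaries. For $i\leq n_1$, the $h_i$-cosimplicial coboundary comes from precomposition with a multiplication $m^{k+1}_{j,j+1}$, which under the isomorphism $\Com(k,-)\cong\Com(1,-)^{\otimes k}$ duplicates the $j$-th tensor factor inside the $i$-th Hochschild input block of $\Com(1,-)$'s. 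The effect on $f$ is to insert a new index into that block carrying the same value as a neighbour. From this description, constancy of $f$ on each relevant block is preserved (an enlarged constant block stays constant), and its failure is preserved as well (the duplication neither removes nor alters existing values, so any pair of unequal values survives). The trailing single-element blocks carry no cosimplicial direction and so require no check; hence both $F$ and $F'$ are preserved.

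I expect the only real bookkeeping obstacle to be tracking how the diagonals $\diag^{\{i\mid f(i)=k\}}$ interact with the duplication: both the domain of $f$ and one of the diagonal sets grow by one index under a cosimplicial coboundary. This compatibility is however already encoded in the Eilenberg-Zilber reduction and factor-splitting steps that produced $D$ from $N$, so once those are unpacked, no further calculation beyond the block-constancy observation above is required.
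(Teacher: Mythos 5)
Your proof is correct and follows essentially the same route as the paper's: the vector-space splitting is formal from the partition $F \sqcup F' = \{f: \{1, \ldots, d_{\underline{h}}\} \to \{1, \ldots, n_2+m_2\}\}$, and closure under the differential reduces to the observation that the cosimplicial coboundaries in the $h_i$-directions act by duplicating a tensor factor $\Com(1,-)$ inside the $i$-th block, inserting a new index with the same $f$-value, which preserves both constancy and non-constancy of $f$ on each interval. Your explicit breakdown of the differential into inner, simplicial, and cosimplicial pieces (with only the last needing attention) is implicit in the paper but makes the structure clearer; otherwise the argument matches the paper's.
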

\begin{proof}
We need to show that the coboundary maps preserve the decomposition. Recall that the $j$--th coboundary map belonging to a $h_i$ doubles the information of the $h_1+1+ \cdots +h_{i-1}+1+j$--th factor and hence adds in the same value for $f$. So if $f$ was constant on ${\{\sum_{j<i} h_{j}+i, \cdots, \sum_{j \leq i}h_{j}+i\}}$ before, it will stay constant on ${\{\sum_{j<i} h_{j}+i, \cdots, \sum_{j \leq i}h_{j}+i+1\}}$ (and on all other intervals, since they did not get touched). Similarly, if $f$ was not constant on one of the intervals, it cannot become constant that way. This proves that both complexes are actual subcomplexes and hence the lemma is proven.
\end{proof}

\begin{Lemma}
The complex $\widetilde{D}'$ has trivial homology.
\end{Lemma}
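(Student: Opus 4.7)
The plan is to refine the splitting of $\widetilde D'$ by pinpointing the smallest block on which $f$ fails to be constant, and then to use Proposition \ref{prop:double} applied to the $h_{i_0}$-cosimplicial direction in order to reduce acyclicity of each piece to a cobar-style calculation on non-constant words.

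For each $1\le i_0\le n_1$, let $F_{i_0}\subseteq \widetilde D'$ be the subcomplex consisting of those summands indexed by $(\underline h, f)$ with $f|_{\mathrm{block\,}i}$ constant for every $i<i_0$ and $f|_{\mathrm{block\,}i_0}$ non-constant. Just as in the previous lemma, a cosimplicial coboundary in any $h_j$-direction duplicates an entry already present in its block, so it preserves both constancy and non-constancy of $f$ on each block. Hence each $F_{i_0}$ is closed under all differentials and
\[\widetilde D'=\bigoplus_{i_0=1}^{n_1}F_{i_0},\]
so it suffices to prove that each $F_{i_0}$ is acyclic.

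Fix $i_0$ and single out the $h_{i_0}$-cosimplicial direction as the vertical one in the multi-complex underlying $F_{i_0}$. Applying Proposition \ref{prop:double} to the zero map $F_{i_0}\to 0$, it is enough to show that for each fixed choice of the remaining cosimplicial/simplicial degrees and of $f$ on the blocks $j\ne i_0$, the $h_{i_0}$-direction cochain complex has trivial cohomology. In this reduced picture the complex is indexed by non-constant words $w=(f(p+1),\ldots,f(p+h_{i_0}+1))$ in the alphabet $\Sigma=\{1,\ldots,n_2+m_2\}$, and the $h_{i_0}$-coboundary is the signed sum $\sum_{j=1}^{h_{i_0}+1}(-1)^j d^j w$ of duplications $d^j w=(w_1,\ldots,w_j,w_j,w_{j+1},\ldots,w_{h_{i_0}+1})$ together with the cyclic duplication $w\mapsto(w_1,\ldots,w_{h_{i_0}+1},w_1)$ coming from the convention $m^{l+1}_{l+1,l+2}=m^{l+1}_{l+1,1}$. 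Identifying $V:=\F\langle\Sigma\rangle$ as the diagonal coalgebra with $\Delta(x)=x\otimes x$, this is (a cyclic variant of) the cobar complex of $V$. Because $V$ decomposes as a direct sum of trivial one-dimensional coalgebras and because the constant-word versus non-constant-word splitting is preserved by the coboundary, all the cohomology of the full complex is concentrated on constant words; the non-constant summand is therefore acyclic, which is exactly the vanishing Proposition \ref{prop:double} requires.

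The main obstacle is the cyclic duplication term: the naive contraction "delete the last position" of a word does not commute with the cyclic differential, and it can in addition send a non-constant word to a constant one, leaving the non-constant subcomplex. My approach would be to construct the contracting homotopy by assigning to each non-constant $w$ the smallest index $p$ (cyclically) at which $w_p\ne w_{p-1}$ and deleting the position just before the first change, then checking $ds+sd=\mathrm{id}$ on non-constant words by a case analysis that uses the cyclic relation to match up the cyclic duplication term with one of the ordinary duplications. The bookkeeping of signs and of the compatibility with the simplicial $j_l$-differentials coming from the diagonals $\diag^{\{i\mid f(i)=l\}}$ is where the technical work sits.
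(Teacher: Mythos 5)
Your proposal takes essentially the same route as the paper: decompose $\widetilde{D}'$ by the index of the first block on which $f$ is non-constant, apply Proposition \ref{prop:double} to reduce to acyclicity in the $h_{i_0}$-cosimplicial direction for each fixed complementary degree, and build the contracting homotopy from the codegeneracy at the first position where $f$ changes value. The paper resolves the ``leaves the subcomplex'' issue you flag by defining the homotopy as a map $\widetilde{D}'_t \to D$ and then projecting back onto the direct summand $\widetilde{D}'_t$, and it likewise leaves the simplicial-identity verification to the reader.
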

\begin{proof}

Recall that 
\begin{align*} \widetilde{D}'=& \prod_{h_1, \cdots, h_{n_1}}\bigoplus_{F'} \bigoplus_{j_1, \cdots, j_{n_2}} C^{h_1} \cdots C^{h_{n_1}} C^0 \cdots C^0
 C_{j_1} \cdots C_{j_{n_2}} C_0 \cdots C_0 (\\
& \diag^{\{i | f(i)=n_2+m_2\}} \cdots  \diag^{\{i | f(i)=1\}} ( \underbrace{\Com(-,-) \otimes \cdots \otimes  \Com(-,-)}_{d_{\underline{h}}})). 
\end{align*}

We give a decomposition of $F'$ into disjoint sets which are preserved by the boundary and coboundary maps. This gives a decomposition of $\widetilde{D}'$ into a direct sum of chain complexes.

Set $F_1'=\left\{f \in F', f|_{\{1, \cdots, h_1+1\}} \text{ not constant}\right\}$ and in general
\[F_t'=\left\{f \in F',f|_{\{\sum_{j<i} (h_{j}+1) +1, \cdots, \sum_{j \leq i}(h_{j}+1)\}} \text{ const. for all }i<t, f|_{\{\sum_{j<t} (h_{j}+1) +1, \cdots, \sum_{j \leq t}(h_{j}+1)\}} \text{ not const.}\right\},\]
i.e. $F'_t$ consists of those functions which are constant on the first $t-1$ intervals and non-constant on the $t$--th one.

The coboundary maps send an element in $F_t'$ to an element in the same $F_t'$ since they preserve the set of values of $f$ on an interval.
So $F'=\coprod_{t} F_t'$ and 
\[\widetilde{D}'= \bigoplus_t \widetilde{D}'_t \]
with 
\begin{align*} \widetilde{D}_t'=& \prod_{h_1, \cdots, h_{n_1}}\bigoplus_{f \in F_t'} \bigoplus_{j_1, \cdots, j_{n_2}} C^{h_1} \cdots C^{h_{n_1}} C^0 \cdots C^0
 C_{j_1} \cdots C_{j_{n_2}} C_0 \cdots C_0 (\\
& \diag^{\{i | f(i)=n_2+m_2\}} \cdots  \diag^{\{i | f(i)=1\}} ( \underbrace{\Com(-,-) \otimes \cdots \otimes  \Com(-,-)}_{d_{\underline{h}}})).
\end{align*}

Define the multi-cosimplicial chain complex 
\begin{align*}&A^{\bullet, \cdots, \bullet}\\= &\bigoplus_{f \in F_t'} \bigoplus_{j_1, \cdots, j_{n_2}} C_{j_1} \cdots C_{j_{n_2}} C_0 \cdots C_0 (
 \diag^{\{i | f(i)=n_2+m_2\}} \cdots  \diag^{\{i | f(i)=1\}} (\Com(-,-) \otimes \cdots \otimes  \Com(-,-)),\end{align*}
thus
\[\widetilde{D}_t'= \prod_{h_1, \cdots, h_{n_1}}  C^{h_1} \cdots C^{h_{n_1}} C^0 \cdots C^0 A^{\bullet, \cdots, \bullet}.\]

Since changing the order in the product total complex is an isomorphism, we can totalize first in the $t$--th direction and get

\begin{align*} \widetilde{D}_t' \cong \prod_{h_1, \cdots, h_{n_1}}  C^{h_1} \cdots C^{h_{n_1}} C^0 \cdots C^0 C^{h_t} A^{\bullet, \cdots, \bullet}.
\end{align*}
We view this as a double chain complex with the first differential the totalization of all $h_i$ besides $h_t$ and the second the totalization of the $t$--th direction and the chain differential of $A^{\bullet, \cdots, \bullet}$.  We want to give a retraction of the total complex of this double complex. Giving a retraction of the double complex is a chain homotopy equivalence between the double complex and zero. By Proposition \ref{prop:double} this yields a  quasi-isomorphism $\widetilde{D}_i'$ to $0$.

Let $A_t^{\bullet, \cdots, \bullet}=  C^{*}(A^{\bullet, \cdots, \bullet})$ be the multi-simplicial cochain chain complex where we applied the Moore functor in the $t$--th direction. A contraction of this cochain complex for each multi-simplicial degree compatible with all the other coboundary maps gives a prove that the associated chain complex $\widetilde{D}_t'$ is acyclic.

We need to define a map $s: \widetilde{D}_t'\to  \widetilde{D}_t'$ such that $d_t \circ s + s \circ d_t =\id$. We will actually give a map $s: \widetilde{D}_t'\to D$ fulfilling this property. Since we have split $D$ into direct summands, the projection of this map to $\widetilde{D}_t'$ gives the contraction we asked for.

Fix $f \in F_t'$ and let $x \in A_t^{\bullet, \cdots, \bullet}$  be in the summand belonging to $f$. %We have to define $s(x)$ such that $s$ commutes with all the other coboundary maps and $d_t (s(x)) + s(d_t(x)) =x$. 
%The differential $d_t$ coming from the $t$--th cosimplicial direction on degree $k$ is given by
%$d_t(x)_k=(-1)^{k+1}\sum_i (-1)^{i+1} d^i_t(x)_k=(-1)^{k+1}\sum_i (-1)^{i+1} d^i_t(x_{k-1})$ with $d^i_t$ the cosimplicial boundary maps.
 Denote by $s_t^i$ the codegeneracies of the $t$--th cosimplicial set.

Let $u(x)$ be minimal such that $f(\sum_{j<t} h_{j}+i+u(x)) \neq f(\sum_{j<t} h_{j}+i+u(x)+1)$ ($u(x)$ exists since $f$ was not constant on that interval).
Define
\[
s(x)_{h_1, \cdots, h_n}=(-1)^{h_t+u(x)+1} s_t^{u(x)}(x)_{h_1, \cdots, h_n}=(-1)^{h_t+u(x)+1} s_t^{u(x)}(x_{h_1, \cdots, h_t+1, \cdots, h_n}).\]

This map is a retraction, which can be checked by using the simplicial identities several times (and is omitted here).%
\end{proof}

\subsection{Applying Eilenberg-Zilber to unify outputs}\label{sec:applez}
Since we requested the functions in $F$ defining $\widetilde{D}$ to be constant on the intervals belonging to the $h_i$, a lot of data is redundant. Hence, we rewrite
\begin{align*} \widetilde{D} \cong & \bigoplus_{\substack{f:\{1, \cdots, n_1+m_1\} \\ \to \{1, \cdots, n_2+m_2\}}}\prod_{h_1, \ldots, h_{n_1}} \bigoplus_{j_{1}, \cdots, j_{n_2}}  C^{h_{1}} \cdots C^{h_{n_1}}C^0 \cdots C^0%
C_{j_{1}} \cdots C_{j_{n_2}} C_0 \cdots C_0  (\\
& \diag^{\{i | f(i)=1\}} \cdots  \diag^{\{i | f(i)=n_2+m_2\}} ( \underbrace{\Com(-,-) \otimes \cdots \otimes  \Com(-,-)}_{n_1+m_1})).
\end{align*}

So far we have shown $\widetilde{D} \simeq N$ where the map is the embedding. Next, we show that $\widetilde{D}$ is quasi-isomorphic to yet another complex $\widehat{D}$, which we then are able to describe explicitly. Furthermore, the quasi-isomorphism is given by the Eilenberg-Zilber map and corresponds to the composition with $x_2$ in the elements of Definition \ref{def:elements}.

Define

\begin{align*} \widehat{D} := & \bigoplus_{\substack{f:\{1, \cdots, n_1+m_1\} \\ \to \{1, \cdots, n_2+m_2\}}} \prod_{h_1, \ldots, h_{n_1}} \bigoplus_{\substack{l_1, \ldots, l_{n_1+m_1}\\ l_i=0 \text{ if } f(i)>n_2}}  
C^{h_{1}}\bC_{l_{1}}(\Com(-,-)) \otimes  \cdots  \otimes C^{h_{n_1}}\bC_{l_{n_1}}(\Com(-,-)))\\
&\otimes \bC_{l_{n_1+1}} (\Com(1,-) )\otimes \cdots \otimes \bC_{l_{n_1+m_1}}(\Com(1,-) )
\end{align*}
which is a subcomplex of 
\[\bigoplus_f \Nat\left(\oc{n_1}{m_1}, \oc{|f^{-1}(1, \cdots, n_2)|} {|f^{-1}(n_2+1, \cdots n_2+m_2)|}\right).\]

Instead of summing over all $l_i$ with the condition that $l_i=0$ if $f(i)>0$, we can introduce a new summation by first summing over natural numbers $j_i$ for $1 \leq i \leq n_2$ (and setting $j_{n_2+1} \ldots, j_{n_2+m_2}$ equal to zero) and then summing over all $l_i$ such that $\sum_{f(i)=t} l_i=j_t$, so 

\begin{align*} &\widehat{D} \cong \bigoplus_{\substack{f:\{1, \cdots, n_1+m_1\} \\ \to \{1, \cdots, n_2+m_2\}}} \prod_{h_1, \ldots, h_{n_1}} \bigoplus_{j_1, \cdots, j_{n_2}} \bigoplus_{\substack{l_1, \ldots, l_{n_1+m_1}\\ \sum_{f(i)=t} l_i =j_t}}  \\
&C^{k_{1}} \cdots C^{h_{n_1}} \bC_{l_{1}} \cdots \bC_{l_{n_1+m_1}} (\Com(-,-) \otimes \cdots\otimes  \Com(-,-)\otimes\Com(1,-)\otimes \cdots \otimes\Com(1,-))
\end{align*}
where $C^{h_i}$ and $\bC_{l_i}$ correspond to the $i$--th factor in the tensor product.

We now reorder the way we totalize the $l_i$'s: We can first totalize in all the directions of each subset of $l_i$'s with $f(i)=j$ for all $j$ and then totalize all those together. For an ordered set $M=\{m_1, \cdots, m_n\}$ we write $\bC_{M}=\bC_{m_n} \cdots \bC_{m_1}$. Given a multisimplicial set $A$ and a fixed $t$ applying Eilenberg-Zilber in all directions with $f(i)=t$ 
\[\bigoplus_{\substack{l_i\\f(i)=t\\ \sum_{f(i)=t}l_i=j_{t}}} C_{\left\{ l_i \right \}} A \simeq C_{j_t} \diag^{\{i | f(i)=t\}} A\]
and hence after applying Proposition \ref{prop:double} and Proposition \ref{cor:chhtpy} we get a quasi-isomorphism

\begin{align*}EZ: \widehat{D} \to \widetilde{D} \cong & \bigoplus_{\substack{f:\{1, \cdots, n_1+m_1\} \\ \to \{1, \cdots, n_2+m_2\}}}\prod_{h_1, \ldots, h_{n_1}} \bigoplus_{j_{1}, \cdots, j_{n_2}}  C^{h_{1}} \cdots C^{h_{n_1}}C^0 \cdots C^0%
C_{j_{1}} \cdots C_{j_{n_2}} C_0 \cdots C_0  (\\
& \diag^{\{i | f(i)=1\}} \cdots  \diag^{\{i | f(i)=n_2+m_2\}} ( \underbrace{\Com(-,-) \otimes \cdots \otimes  \Com(-,-)}_{n_1+m_1}))
\end{align*}
which on elements applies the Eilenberg-Zilber morphism to the outputs which have equal value under $f$. By the definition of the Eilenberg-Zilber map (cf. \cite[Sec. 8.5]{weib95}) this corresponds to taking the shuffle product and hence is multiplication with the element $x_2$ given in Definition \ref{def:elements} (which was independent of the choice of the $h_i$'s). 

\subsection{Describing a subcomplex in terms of operations}\label{sec:oper}
As a last step we need to show that $\widehat{D}$ is generated by infinite sums of linear combinations of elements of the form $x_1$ as described in Definition \ref{def:elements}. We split $\widehat{D}$ into summands $\widehat{D}_f$ for  $f: \{1, \cdots, n_1+m_1\} \to \{1, \cdots, n_2+m_2\}$. Then  $\widehat{D}_f$ is given by

\begin{align*} \widehat{D}_f :=  \prod_{h_1, \ldots, h_{n_1}} \bigoplus_{\substack{l_1, \ldots, l_{n_1+m_1}\\ l_i=0 \text{ if } f(i)>n_2}}  
C^{h_{1}}\bC_{l_{1}}(\Com(-,-)) \otimes  \cdots  \otimes C^{h_{n_1}}\bC_{l_{n_1}}(\Com(-,-))\\
\otimes \ \bC_{l_{n_1+1}} (\Com(1,-) )\otimes \cdots \otimes \bC_{l_{n_1+m_1}}(\Com(1,-) )
\end{align*}
Furthermore, $C^*C_0(\Com(-,-))= C^*(\Com(-,1))$ is the cochain complex $\Com(h, 1) \cong *$ in each degree $h-1$ and has differentials $0$ and the identity, alternately. The inclusion of the cochain complex with only one nonzero entry $\F =C^0C_0(\Com(-,-))$ in degree $0$ is a homotopy equivalence and hence induces a quasi-isomorphism on total complexes. So after reordering, we get a quasi-isomorphism

\begin{align*} \widehat{D}_f \simeq &\prod_{\substack{h_1, \ldots, h_{n_1}\\f(i) \leq n_2}} \bigoplus_{\substack{l_1, \ldots, l_{n_1+m_1}\\ f(i) \leq n_2}}  
C^{h_{1}}\bC_{l_{1}}(\Com(-,-)) \otimes  \cdots  \otimes C^{h_{n_1}}\bC_{l_{n_1}}(\Com(-,-))\\
\vspace{-2cm}&\hspace{4.5cm}\otimes \bC_{l_{n_1+1}} (\Com(1,-) )\otimes \cdots \otimes \bC_{l_{n_1+m_1}}(\Com(1,-) )\\
&\otimes C^0C_0(\Com(-,-)) \otimes \cdots \otimes C^0C_0(\Com(-,-)) \otimes C_0(\Com(1,-)) \otimes \cdots \otimes C_0(\Com(1,-)) \\
\end{align*}
The terms $C^0C_0(\Com(-,-))$ correspond to those $i$ with $i \leq n_1$ and $f(i)>n_2$, whereas the terms of the form $C_0(\Com(1,-))$ give those $i$ with $i > n_1$ and $f(i)>n_2$. The first ones are spanned by the element $p \in \Nat(\oc{1}{0}, \oc{0}{1})$ and the second ones by $\id \in \Nat(\oc{0}{1}, \oc{0}{1})$.

Now we denote $c=|f^{-1}(\{1, \cdots, n_2\})|$ and $c'=|f^{-1}(\{1, \cdots, n_2\}) \cap \{1, \cdots, n_1\}|$ and after relabeling the $h_i$ and $l_i$ are left to compute
\begin{align*}
\prod_{h_1, \ldots, h_{c'}} \bigoplus_{l_1, \cdots, l_c}  
\hspace{-0.2cm}C^{h_{1}}\bC_{l_{1}}(\Com(-,-)) \otimes  \cdots  \otimes C^{h_{c'}}\bC_{l_{c'}}(\Com(-,-))
\otimes \bC_{l_{c'+1}} (\Com(1,-) )\otimes \cdots \otimes \bC_{l_{c}}(\Com(1,-) )
\end{align*}
which is congruent to
\begin{align*}
\prod_{h_1, \ldots, h_{c'}} \bigoplus_{l_1, \cdots, l_{c'}}  
C^{h_{1}}\bC_{l_{1}}(\Com(-,-)) \otimes  \cdots  \otimes C^{h_{c'}}\bC_{l_{c'}}(\Com(-,-))\\
\otimes \bigoplus_{l_{c'+1}, \ldots, l_c} \bC_{l_{c'+1}} (\Com(1,-) )\otimes \cdots \otimes \bC_{l_{c}}(\Com(1,-) ).
\end{align*}
We know that \[ \bC_{l}(\Com(1,-))\cong \bC_l(S^1)=\begin{cases} 1 &\text{if } l=0\\y &\text{if } l=1 \\0 &\text{else.}\end{cases}\]
In terms of operations, $1$ corresponds to $sh^0$ and $y$ to $B^0$ and we thus conclude that the complex
\[\bigoplus_{l_{c'+1}, \ldots, l_c} \bC_{l_{c'+1}} (\Com(1,-) )\otimes \cdots \otimes \bC_{l_{c}}(\Com(1,-) )\]
is homotopy equivalent to the complex spanned by tensor products of $sh^0$ and $B^0$.

Now we can deal with the last part of the complex:
\begin{Lemma}
There is a quasi-isomorphisms
\begin{align*}
&\prod_{h_i} \bigoplus_{l_i} C^{h_1}\bC_{l_1}(\Com(-,-)) \otimes \cdots \otimes C^{h_{c'}}\bC_{l_{c'}}(\Com(-,-)) \\
%\simeq &(\underbrace{C_*(H^*(S^1), H^*(S^1))) \otimes \cdots \otimes C_*(H^*(S^1), H^*(S^1))}_{c'})^*\\
\simeq &(\underbrace{\bC_*(H^*(S^1), H^*(S^1)) \otimes \cdots \otimes \bC_*(H^*(S^1), H^*(S^1))}_{c'}).^*
\end{align*}
\end{Lemma}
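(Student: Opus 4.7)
The plan is to iterate the single-factor argument from Section \ref{sec:conHH}, which established the quasi-isomorphism
\[Q:\bD\bC(\Com(-,-)) \xrightarrow{\simeq} (\bC_*(H^*(S^1), H^*(S^1)))^*\]
by combining the Alexander--Whitney map with the duality $(\bC(\Phi))^* \cong \bD(\Phi^*)$ of Proposition \ref{Prop:dual}. The $c'$-fold tensor product on the left hand side of the desired equivalence is structurally a $c'$-fold tensor product of single coHochschild-Hochschild factors, so the idea is simply to apply (the proof of) $Q$ once per factor and then to identify a tensor product of duals with the dual of a tensor product.

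Concretely, tensor products commute with direct sums, so
\[\bigoplus_{l_1, \ldots, l_{c'}} \bigotimes_{i=1}^{c'} C^{h_i}\bC_{l_i}(\Com(-,-)) \cong \bigotimes_{i=1}^{c'} \left(\bigoplus_{l_i} C^{h_i}\bC_{l_i}(\Com(-,-))\right),\]
where each tensor slot is the $h_i$-cosimplicial level of $\bC(\Com(-,-))$. The outer product $\prod_{h_1, \ldots, h_{c'}}$ can be processed one factor at a time: fix $h_2, \ldots, h_{c'}$; then in the first factor we have a halfplane double complex in $(h_1, l_1)$. By the Alexander--Whitney argument from Section \ref{sec:conHH}, this first factor is quasi-isomorphic in its $l_1$-direction to $H_*(S^1)^{\otimes h_1}$, and its $h_1$-cosimplicial structure translates to the dual of iterated cup products on $H^*(S^1)^{\otimes h_1}$. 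Proposition \ref{Prop:dual} then identifies $\prod_{h_1} H_*(S^1)^{\otimes h_1}$ (with the cup-product coboundary) with $(\bC_*(H^*(S^1), H^*(S^1)))^*$. At each step, Propositions \ref{prop:double} and \ref{cor:chhtpy} propagate the quasi-isomorphism through the remaining product and tensor factors. Iterating for $i = 1, \ldots, c'$ yields a quasi-isomorphism to $\bigotimes_{i=1}^{c'} (\bC_*(H^*(S^1), H^*(S^1)))^*$.

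It remains to identify the tensor product of duals with the dual of the tensor product. By the explicit computation in Section \ref{sec:HHS1}, $\bC_*(H^*(S^1),H^*(S^1))$ decomposes in each total degree as an indexed direct sum of one-dimensional spaces spanned by the elements $1 \otimes x^{\otimes k}$ and $x \otimes x^{\otimes k}$; consequently $\bC_*(H^*(S^1),H^*(S^1))^{\otimes c'}$ also decomposes in each total degree as an indexed direct sum of one-dimensional spaces. Dualization then converts this direct sum into the corresponding product, and the canonical map $\bigotimes_{i=1}^{c'} V_i^* \to (\bigotimes V_i)^*$ realises the required isomorphism on each graded piece. The main technical obstacle is precisely this product-tensor bookkeeping: tensor products do not in general commute with infinite products $\prod_{h_i}$, but here the halfplane structure together with the degreewise finite-dimensionality of $H^*(S^1)$ make the interchange well-defined via the iterated application of Propositions \ref{prop:double} and \ref{cor:chhtpy}.
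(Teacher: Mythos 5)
Your overall strategy---iterate the single-factor Alexander--Whitney argument of Section \ref{sec:conHH} and keep propagating the quasi-isomorphism through the remaining directions via Propositions \ref{prop:double} and \ref{cor:chhtpy}---is sound, and it is essentially the same mechanism as in the paper. But the bookkeeping in the second half of your argument is off in a way that matters.

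First, the iteration does not produce $\bigotimes_{i=1}^{c'}(\bC_*(H^*(S^1),H^*(S^1)))^*$. The product $\prod_{h_1,\ldots,h_{c'}}$ is a single joint product over multi-indices, and processing the first factor does not let you ``pull out'' $\prod_{h_1}$ as a separate tensor factor. What the iteration actually yields, after applying $AW$ slot by slot, is
\[
\prod_{h_1,\ldots,h_{c'}} (\bC_*(S^1))^{\otimes h_1}\otimes\cdots\otimes(\bC_*(S^1))^{\otimes h_{c'}},
\]
which (by ``dual of a direct sum is the product of duals'' applied once, over the multi-index $(h_1,\ldots,h_{c'})$, together with the finite-dimensionality of each summand $(H^*(S^1))^{\otimes h_1}\otimes\cdots$) is \emph{already} $(\bC_*(H^*(S^1),H^*(S^1))^{\otimes c'})^*$. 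This is exactly what the paper does: compute the dual of the tensor product directly, as a product over tuples of finite-dimensional spaces, and then check that the map $\prod_{\underline h} AW\otimes\cdots\otimes AW$ is a map of double complexes that is a quasi-isomorphism for each fixed tuple, so Proposition \ref{prop:double} finishes in one stroke.

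Second, and this is the genuine gap, the final paragraph of your argument is false. For the graded vector spaces involved, the canonical map $\bigotimes_i V_i^*\to(\bigotimes_i V_i)^*$ is \emph{not} an isomorphism, only an injection. Already for $c'=2$ in degree $0$ the left-hand side is $\bigl(\prod_k\langle a^k\rangle\bigr)\otimes\bigl(\prod_k\langle a^k\rangle\bigr)$ while the right-hand side is $\prod_{k_1,k_2}\langle a^{k_1}\otimes a^{k_2}\rangle$, and the ``diagonal'' sequence $(\delta_{k_1,k_2})$ lies in the latter but not in the image of the former: elements of the tensor of two infinite products are finite sums of ``rank one'' sequences. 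The ``indexed direct sum of one-dimensional spaces'' observation does not rescue this, because dualizing converts the outer direct sum over $(k_1,\ldots,k_{c'})$ into a product over the whole multi-index set, not a tensor product of products; and the degree-wise finite-dimensionality of $H^*(S^1)$ (a two-dimensional space) is irrelevant to the failure, which occurs in the infinite product over word length $h$. Since both sides of this would-be identification have trivial differential, they would have to be literally isomorphic to be quasi-isomorphic, so Propositions \ref{prop:double}/\ref{cor:chhtpy} cannot repair it either. The fix is simply to not go through the tensor-of-duals at all: keep the joint product $\prod_{h_1,\ldots,h_{c'}}$ intact, identify it directly with the dual of the tensor product, and compare via the slot-wise Alexander--Whitney map as in the paper.
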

\begin{proof}
%Since we work over a field and hence quasi-isomorphisms dualize to quasi-isomorphisms, the last quasi-isomorphism follows immediately from the quasi-isomorphism $\bC_*(H^*(S^1), H^*(S^1)) \simeq C_*(H^*(S^1), H^*(S^1))$.

%For the first quasi-isomorphism, w
We recall that
\[\underbrace{C_*(H^*(S^1), H^*(S^1)) \otimes \cdots \otimes C_*(H^*(S^1), H^*(S^1))}_{c'} \cong \bigoplus_{h_i} (H^*(S^1))^{\otimes h_1} \otimes \cdots \otimes (\otimes H^*(S^1))^{\otimes h_{c'}}\]
which is the totalization of the double complex with the first differential the one coming from the Hochschild differentials $d_i$'s in each of the tensor components ($d_i=\Delta^* \circ AW^*$) and the second differential trivial.
So the dual of it is given by
\begin{align*} \prod_{h_i} ((H^*(S^1))^{\otimes h_1} \otimes \cdots \otimes (H^*(S^1))^{\otimes  h_{c'}})^* 
\cong \prod_{h_i} (\bC_*(S^1))^{\otimes h_1} \otimes \cdots \otimes (\bC_*(S^1))^{\otimes h_{c'}} \end{align*}
with the (first) differential induced by the $d_i^*=(\Delta^* \circ AW^*)^*=(AW \circ \Delta)$. 
On the other hand, 
\[\prod_{h_i} \bigoplus_{l_i} C^{h_1}\bC_{l_1}(\Com(-,-)) \otimes \cdots \otimes C^{h_{c'}}\bC_{l_{c'}}(\Com(-,-))\]
is the total complex of the double complex with the first direction labeled by the $h_i's$ and the second direction labeled by the $l_i's$. The total complex of this double complex is isomorphic to $\widetilde{C}$ (by a reordering of totalization).

Moreover, rewriting $C^{h_i}\bC_{l_i}(\Com(-,-)) \cong C^{h_i}(\bC_{l_i}({S^1}^{\times -}))$, the first differential of the double complex $\widetilde{C}$ is induced by the coboundary maps $d^j$ which each apply $\Delta$ to the $j$--th factor of this term. Similarly to the previous section, the map 
\begin{align*}\prod_{h_i}  AW \otimes \cdots \otimes AW: &\prod_{h_i} \bigoplus_{l_i} C^{h_1}\bC_{l_1}(\Com(-,-)) \otimes \cdots C^{h_{c'}}\bC_{l_{c'}}(\Com(-,-))\\
\to &\prod_{h_i} (\bC_*(S^1))^{\otimes h_1} \otimes \cdots \otimes (\bC_*(S^1))^{\otimes h_{c'}}\end{align*}
is a map of double complexes (since $AW \circ d^i=AW \circ \Delta= (d_i)^*$) and induces a quasi-isomorphism  for each fixed tuple $( h_1, \cdots,h_{c'})$. By Proposition \ref{prop:double} this yields a quasi-isomorphism of total complexes and thus the lemma is proven.

\end{proof}
Using the elements $sh^k$ and $B^k$ which we constructed combinatorially earlier on, we can describe a general element in the homology of the above product. This implies that the subcomplex generated by these cycles is quasi-isomorphic to the complex we were computing so far. 

For $j=0,1$ let $\tilde{c}^j_{h_i} \in C^{h_i}\bC_{h_i+j}(\Com(-,-))$ be defined via
$\tilde{c}^0_{h_i}= sh^{h_i}$ and $\tilde{c}^1_{h_i}= B^{h_i}$. Completely analogous to the proof of Theorem \ref{Th:akbk}, we can show:

\begin{Prop}
The complex
\[\prod_{h_i} \bigoplus_{l_i} C^{h_{c'}}\bC_{l_{c'}}(\Com(-,-)) \otimes \cdots C^{h_1}\bC_{l_1}(\Com(-,-))\]
is quasi-isomorphic to the complex which in degree $n$ is given by elements of the form
\[x=\sum_{\substack{s:\{1, \cdots c'\} \to \{0,1\}\\ \sum s(i)=n}} \sum_{h_1=0}^\infty \cdots \sum_{h_{c'}=0}^\infty r^{s}_{h_1, \cdots, h_{c'}} \tilde{c}^{s(1)}_{h_{1}} \otimes \cdots \otimes \tilde{c}^{s(c')}_{h_{c'}}\] with $r^s_{h_1, \cdots, h_{c'}} \in \F$.

Again, in each degree this is a finite sum, since all the $(\tilde{c}_{h_i})_j=0$ for $j<h_i$. More explicitly,
\[(x)_{j_{1}, \cdots, j_{c'}}=\sum_{\substack{s:\{1, \cdots c'\} \to \{0,1\}\\ \sum s(i)=n}}\sum_{h_{1}=0}^{j_{1'}} \cdots \sum_{h_{c'}=0}^{j_{c'}} r_{h_{1}, \cdots, h_{c'}} (\tilde{c}^{s({1})}_{h_{1}})_{j_{{1}}} \otimes \cdots \otimes (\tilde{c}^{s(c')}_{h_{c'}})_{j_{c'}}.\]
\end{Prop}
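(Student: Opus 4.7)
The plan is to mirror the proof of Theorem \ref{Th:akbk} in the tensor product setting. Using the preceding lemma, the first step reduces the problem to computing the homology of the dual chain complex $(\bC_*(H^*(S^1), H^*(S^1))^{\otimes c'})^*$, together with identifying the images of the tensor products $\tilde{c}^{s(1)}_{h_1} \otimes \cdots \otimes \tilde{c}^{s(c')}_{h_{c'}}$ under the composite quasi-isomorphism.

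First I would recall from Section \ref{sec:HHS1} that $\bC_*(H^*(S^1), H^*(S^1))$ has trivial differential and is concentrated in degrees $0$ and $-1$, with basis $\{1 \otimes x^{\otimes h}\}_{h \geq 0}$ in degree $0$ and $\{x \otimes x^{\otimes h}\}_{h \geq 0}$ in degree $-1$. Consequently the $c'$-fold tensor power also has trivial differential, and its basis is indexed by the data $(s, h_1, \dots, h_{c'})$ with $s \colon \{1, \dots, c'\} \to \{0,1\}$ and $h_i \geq 0$; the tensor factor in position $i$ lies in degree $-s(i)$ and has word length $h_i + 1$. Taking linear duals turns the direct sum into a product over the same indexing data, so elements of degree $n$ in the dual complex are products $\sum_{s, h_i} f^s_{h_1, \dots, h_{c'}} \alpha^s_{h_1, \dots, h_{c'}}$ with $\sum_i s(i) = n$, where $\alpha^s_{h_1, \dots, h_{c'}}$ denotes the basis element dual to the corresponding generator (so in the identification of Section \ref{sec:HHS1} it is a tensor product of the $a^{h_i}$ and $b^{h_i}$ determined by $s$).

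Next I would compute the image of $\tilde{c}^{s(1)}_{h_1} \otimes \cdots \otimes \tilde{c}^{s(c')}_{h_{c'}}$ under the quasi-isomorphism of the preceding lemma, which is (the dual double of) the tensor power $AW^{\otimes c'}$ of the Alexander--Whitney map. Since everything is built from tensors, Proposition \ref{prop:aktilde} applied in each factor gives
\[
AW^{\otimes c'}\bigl(\tilde{c}^{s(1)}_{h_1} \otimes \cdots \otimes \tilde{c}^{s(c')}_{h_{c'}}\bigr)_{j_1, \dots, j_{c'}} = \Bigl( \prod_{i=1}^{c'} \binom{j_i - 1}{j_i - h_i} \Bigr)\, \alpha^{s}_{j_1, \dots, j_{c'}}
\]
when $j_i \geq h_i$ for all $i$ (with the appropriate adjustment in the edge case $h_i = 0$), and $0$ otherwise. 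In particular $(\tilde{c}^{s(i)}_{h_i})_{j_i} = 0$ for $j_i < h_i$, which confirms that the apparently infinite sum in the statement reduces to a finite sum in every multi-degree.

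The main step, and the only real obstacle, is then a multi-indexed version of the triangular inversion used in the proof of Theorem \ref{Th:akbk}. Fixing $s$, I would show that for arbitrary prescribed coefficients $f^s_{j_1, \dots, j_{c'}} \in \F$ the system
\[
\sum_{h_1 = 0}^{j_1} \cdots \sum_{h_{c'} = 0}^{j_{c'}} r^s_{h_1, \dots, h_{c'}} \prod_{i=1}^{c'} \binom{j_i - 1}{j_i - h_i} = f^s_{j_1, \dots, j_{c'}}
\]
has a unique solution $r^s_{h_1, \dots, h_{c'}}$, by induction on the multi-index $(j_1, \dots, j_{c'})$ in lexicographic order. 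Uniqueness (hence injectivity of the map from the candidate complex to the homology of $\widehat{D}_f$) follows since the diagonal coefficient $\prod_i \binom{j_i-1}{0} = 1$ lets us solve for $r^s_{j_1, \dots, j_{c'}}$ in terms of strictly smaller multi-indices; existence (hence surjectivity) follows by defining $r^s$ from this recursion. Applying this for each $s$ separately finishes the proof.
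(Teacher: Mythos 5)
Your proposal is correct and takes essentially the same route the paper does: the paper states the Proposition follows ``completely analogous to the proof of Theorem~\ref{Th:akbk}'' and gives no further detail, while you have filled in exactly that analogy — reducing to $(\bC_*(H^*(S^1),H^*(S^1))^{\otimes c'})^*$ via the preceding lemma, applying Proposition~\ref{prop:aktilde} in each tensor factor to get a product of binomial coefficients, and running the multi-indexed triangular inversion with diagonal entries $\prod_i\binom{j_i-1}{0}=1$.
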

Now we are finally able to put everything together and prove the main theorem:

\begin{proof}[Proof of Theorem \ref{th:mainmain}]
We have seen that the composition
\[\widehat{D} \xrightarrow{EZ} \widetilde{D} \hookrightarrow \Nat(\oc{n_1}{m_1}, \oc{n_2}{m_2})\]
is a quasi-isomorphism.

The first map actually splits into quasi-isomorphisms
\[ \widehat{D}_f \xrightarrow{EZ} \widetilde{D}_f\]
given by multiplication with the element $x_2$ described in Definition \ref{def:elements}.
Moreover, taking the results of the last section together, we have seen that $\widehat{D}_f \subset \Nat(\oc{n_1}{m_1}, \oc{c}{n_1+m_1-c})$ is spanned by infinite linear combinations of elements as described above. The only difference of these elements to the elements described in Definition \ref{def:elements} is, that we first chose $f$ and then the $k_i$'s. However, this commutes (it is equivalent to pulling out the direct sum over the functions $f$ out of the product over the $k_i$'s). Hence the result follows.
\end{proof}
\bibliographystyle{alpha}

\end{document}